\documentclass[11pt]{amsart}

\usepackage[T1]{fontenc}
\usepackage[utf8]{inputenc}
\usepackage[english]{babel}
\usepackage{relsize}
\usepackage{verbatim}
\usepackage{mathtools,amssymb,amsthm}

    \usepackage[vertfit]{breakurl}

\usepackage[maxnames=50,backend=biber,giveninits=true]{biblatex}
\usepackage{csquotes}
\addbibresource{enum-bibliography.bib}

\usepackage[shortlabels]{enumitem}
\usepackage{multirow}
\setlist{nolistsep}
\setenumerate{label={{\normalfont({\roman*})}},leftmargin=6mm,itemsep=3pt,topsep=3pt}
\setitemize{label={$\vcenter{\hbox{\tiny$\bullet$}}$},leftmargin=6mm,itemsep=3pt,topsep=3pt}
\usepackage{caption,subcaption}

\usepackage[margin=3cm,includefoot,footskip=30pt]{geometry}

\usepackage{array}
\newcolumntype{C}[1]{>{\arraybackslash$}p{#1}<{$}}

\usepackage{titlesec}
\titleformat{\section}{\normalfont\rmfamily\Large\bfseries}{\thesection.}{14pt}{}
\titlespacing{\section}{0pt}{14pt}{7pt}

\titleformat{\subsection}{\normalfont\rmfamily\large\bfseries}{\thesubsection.}{12pt}{}
\titlespacing{\subsection}{0pt}{12pt}{6pt}

\renewcommand{\emptyset}{\varnothing}

\renewcommand{\leq}{\leqslant}

\renewcommand{\geq}{\geqslant}


\usepackage[algoruled,linesnumbered,vlined]{algorithm2e}
\usepackage{algpseudocode}

\usepackage{ifthen}
\usepackage{tikz}
\usetikzlibrary{matrix,fit,calc,positioning}
\tikzstyle{every picture}=[line join=round,line cap=round,line width=.75pt,every label/.append style={font=\small},label distance=-1.5pt]
\tikzstyle{every node}=[font=\small]
\tikzset{>=stealth}

\newcommand{\Z}{\mathbb Z}

\newcommand{\R}{\mathbb R}

\newcommand{\aff}{\mathrm{aff}}
\newcommand{\conv}{\mathrm{conv}}
\newcommand{\col}{\mathrm{col}}

\newcommand{\pr}{\mathrm{Pr}}

\newcommand{\rk}{\mbox{rk}}

\newcommand{\calB}{\mathcal{B}}

\newtheorem{prop}{Proposition}
\newtheorem{lem}[prop]{Lemma}

\newtheorem{conj}[prop]{Conjecture}
\newtheorem{thm}[prop]{Theorem}
\newtheorem{cor}[prop]{Corollary}

\theoremstyle{definition}

\newcommand{\restr}[1]{\big|_{#1}}

\newcommand{\op}{product}

\makeatletter
\renewcommand\paragraph{\@startsection{paragraph}{4}{\z@}%
{3.25ex \@plus1ex \@minus.2ex}{-.2em}%
{\normalfont\normalsize\bfseries}}
\makeatother

\pagestyle{plain}

\begin{document}

\title{Recognizing Cartesian products\\ of matrices and polytopes}
\author{Manuel Aprile, Michele Conforti, Yuri Faenza,\\ Samuel Fiorini, Tony Huynh, Marco Macchia}
\date{}
%
%
\maketitle


\begin{abstract}
    The \emph{$1$-\op} of matrices $S_1 \in \R^{m_1 \times n_1}$ and $S_2 \in \R^{m_2 \times n_2}$ is the matrix in $\R^{(m_1+m_2) \times (n_1n_2)}$
whose columns are the concatenation of each column of $S_1$ with each column of $S_2$.  Our main result is a polynomial time algorithm for the following problem:  given a matrix $S$, is $S$ a $1$-\op, up to permutation of rows and columns?  
Our main motivation is a close link between the 1-\op{} of matrices and the Cartesian product of polytopes, which goes through the concept of slack matrix. Determining whether a given matrix is a slack matrix is an intriguing problem whose complexity is unknown, and our algorithm reduces the problem to irreducible instances. 
Our algorithm is based on minimizing a symmetric submodular function that expresses mutual information in information theory.  
We also give a polynomial time algorithm to recognize a more complicated matrix product, called the \emph{$2$-\op}.  Finally, as a corollary of our $1$-\op{} and $2$-\op{} recognition algorithms, we obtain a polynomial time algorithm to recognize slack matrices of $2$-level matroid base polytopes.
\end{abstract}

\section{Introduction}


Determining if an object can be decomposed as the `product' of two simpler objects is a ubiquitous theme in mathematics and computer science.  For example, every integer $n \geq 2$ has a unique factorization into primes, and every finite abelian group is the direct sum of cyclic groups.  Moreover, algorithms to efficiently \emph{find} such `factorizations' are widely studied, since many algorithmic problems are easy on indecomposable instances. In this paper, our objects of interest are matrices and polytopes.

For a matrix $S$, we let $S^\ell$ be the $\ell$th column of $S$.
The \emph{$1$-\op} of $S_1 \in \R^{m_1 \times n_1}$ and $S_2 \in \R^{m_2 \times n_2}$ is the matrix $S_1 \otimes S_2 \in \R^{(m_1+m_2) \times (n_1n_2)}$ such that for each $j \in  [n_1\cdot n_2]$, 
$$
(S_1 \otimes S_2)^{j} := \begin{pmatrix}
S_1^{k} \\ S_2^{\ell}
\end{pmatrix},
$$
where $k \in [n_1]$ and $\ell \in [n_2]$ satisfy $j=(k-1)n_2+\ell$. For example, 
$$
\begin{pmatrix} 1 &0 \end{pmatrix} \otimes \begin{pmatrix} 0 \end{pmatrix} =
\begin{pmatrix}
1 & 0\\
0 & 0
\end{pmatrix}
, \qquad
\begin{pmatrix} 1 &0 
\\ 2 & 3\end{pmatrix} \otimes \begin{pmatrix} 1 &0&0 \\ 0  & 1 & 1 \\ \end{pmatrix}=\begin{pmatrix} 1 &1 &1 &0&0 &0 
\\ 2 & 2& 2& 3& 3& 3\\ 1 &0&0 & 1&0&0\\ 0  & 1 & 1 & 0  & 1 & 1 \end{pmatrix}.
$$
Two matrices are \emph{isomorphic} if one can be obtained from the other by permuting rows and columns. A matrix $S$ is a \emph{1-\op} if there exist two non-empty matrices $S_1$  and $S_2$ such that $S$ is isomorphic to $S_1\otimes S_2$.  The following is our first main result.

\begin{thm} \label{thm:main1}
Given $S\in \R^{m\times n}$, there is an algorithm that is polynomial in $n,m$ 
which correctly determines if $S$ is a 1-\op{} and, in case it is, outputs two matrices $S_1, S_2$ such that $S_1\otimes S_2$ is isomorphic to $S$.
\end{thm}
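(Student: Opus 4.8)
The plan is to capture being a $1$-\op{} by a single information-theoretic equation and then locate the underlying partition of the rows by minimizing a symmetric submodular function. Given $S\in\R^{m\times n}$, let $J$ be uniform in $[n]$, put $X_i:=S_{iJ}$ for $i\in[m]$, and write $X_A:=(X_i)_{i\in A}$. Define $f\colon 2^{[m]}\to\R_{\ge0}$ by
\[
f(A):=I\!\left(X_A;X_{[m]\setminus A}\right)=H(X_A)+H(X_{[m]\setminus A})-H(X_{[m]}),
\]
the mutual information between the two groups of rows. Shannon entropy is submodular, and precomposing a submodular function with set complementation again gives a submodular function, so $f$ is submodular; it is manifestly symmetric, non-negative, and zero on $\emptyset$ and on $[m]$. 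Finally $f(A)$ depends only on the multiset of columns of $S$, so it is unchanged under isomorphism (up to the matching relabelling of $[m]$), and for fixed $A$ it is computed from $S$ in polynomial time by bucketing the columns according to their restrictions to $A$ and to $[m]\setminus A$.

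The core claim is: $S$ is a $1$-\op{} if and only if $f(A)=0$ for some $A$ with $\emptyset\ne A\subsetneq[m]$. For the ``only if'', we may assume after relabelling that $S=S_1\otimes S_2$; let $A$ be the set of rows occupied by $S_1$, which is proper and non-empty as $S_1,S_2$ are non-empty. Under the indexing $[n]\cong[n_1]\times[n_2]$, the vector $X_A$ is a function of the first coordinate alone and $X_{[m]\setminus A}$ of the second alone, and these coordinates are independent and uniform; hence $X_A$ and $X_{[m]\setminus A}$ are independent and $f(A)=0$. For the ``if'', suppose $f(A)=0$. For each pattern $a$ arising as some $S^{j}\restr{A}$, let $c_a$ be the number of columns with that restriction, and define $d_b$ analogously on $[m]\setminus A$; note $\sum_a c_a=\sum_b d_b=n$. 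Independence of $X_A$ and $X_{[m]\setminus A}$ says that exactly $c_a d_b/n$ columns $j$ satisfy $S^{j}\restr{A}=a$ and $S^{j}\restr{[m]\setminus A}=b$; since this is a non-negative integer for every pair $(a,b)$, we get $n\mid c_a d_b$ for all $a,b$, hence $n\mid \gcd_{a,b}(c_a d_b)=(\gcd_a c_a)(\gcd_b d_b)=:\gamma_1\gamma_2$. Let $S_1$ be the matrix on the rows $A$ whose columns are the patterns $a$, each repeated $c_a/\gamma_1$ times, and let $S_2$ be the matrix on the rows $[m]\setminus A$ whose columns are the patterns $b$, each repeated $d_b\gamma_1/n$ times --- a positive integer, since $n\mid\gamma_1\gamma_2\mid\gamma_1 d_b$. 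Then $S_1$ has $n/\gamma_1$ columns and $S_2$ has $\gamma_1$ columns, and in $S_1\otimes S_2$ the column $\binom{a}{b}$ occurs $(c_a/\gamma_1)(d_b\gamma_1/n)=c_ad_b/n$ times, exactly as in $S$; so $S$ is isomorphic to $S_1\otimes S_2$.

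The algorithm follows. If $m\le1$, no proper non-empty subset exists, so $S$ is not a $1$-\op{}. Otherwise, run Queyranne's algorithm to compute in polynomial time a minimizer $A^{\ast}$ of $f$ over $\{A:\emptyset\ne A\subsetneq[m]\}$, then test whether $f(A^{\ast})=0$ --- which is precisely the combinatorial check above --- and, if so, output the matrices $S_1,S_2$ constructed from $A^{\ast}$ as in the ``if'' direction. Correctness is immediate from the characterization: $S$ is a $1$-\op{} iff the minimum of $f$ over non-trivial sets is $0$ iff $f(A^{\ast})=0$. A minor technical point is that $f$ is real-valued; but $f(A)=0$ is decided exactly and combinatorially, and any positive value of $f$ is at least $1/\poly(n)$ --- e.g.\ by Pinsker's inequality, since the joint law and the product of the marginals are distinct rationals with denominators $n$ and $n^2$ --- so a value oracle returning $f$ to $\poly(n)$ bits suffices.

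I expect the main obstacle to be the ``if'' direction of the characterization: one must see that independence of $X_A$ and $X_{[m]\setminus A}$ is not merely necessary but already forces the integral ``combinatorial rectangle'' structure $n\mid c_a d_b$ that permits factoring $S$ --- which is what makes the single scalar equation $f(A)=0$ equivalent to being a $1$-\op{} along $(A,[m]\setminus A)$. Polynomial-time minimization of a symmetric submodular function is then the off-the-shelf engine that turns this equivalence into the claimed algorithm.
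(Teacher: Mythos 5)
Your proposal is correct and follows essentially the same route as the paper: it encodes the row partition via the mutual information $f(X)=I(C_X;C_{\overline{X}})$ of a uniformly chosen random column, characterizes being a 1-\op{} by $f(X)=0$ (independence), reconstructs the factors from an integer rank-one factorization of the pattern-multiplicity matrix, and invokes Queyranne's symmetric submodular minimization. The only differences are cosmetic: you normalize multiplicities by gcd's rather than the paper's iterative denominator-clearing, prove submodularity directly instead of citing it, and are in fact a bit more careful than the paper about the precision needed to evaluate $f$.
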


A straightforward implementation of our algorithm would run in $O(m^3(m+n))$ time. However, below we do not explicitly state the running times of our algorithms nor try to optimize them.

The proof of Theorem~\ref{thm:main1} is by reduction to symmetric submodular function minimization using the concept of \emph{mutual information} from information theory.  Somewhat surprisingly, we do not know of a simpler proof of Theorem~\ref{thm:main1}.  

Our main motivation for Theorem~\ref{thm:main1} is geometric.  If $P_1 \subseteq \R^{d_1}$ and $P_2 \subseteq \R^{d_2}$ are polytopes, then their \emph{Cartesian product} is the polytope $P_1 \times P_2 := \{(x_1,x_2) \in \R^{d_1} \times \R^{d_2} \mid x_1 \in P_1,\ x_2 \in P_2\}$. 

Notice that if $P$ is given by an irredundant inequality description, determining if $P=P_1 \times P_2$ for some polytopes $P_1, P_2$ amounts to determining whether the constraint matrix can be put in block diagonal structure.  If $P$ is given as a list of vertices, then the algorithm of Theorem~\ref{thm:main1} determines if  $P$ is a Cartesian product.

Furthermore it turns out that the 1-\op{} of matrices corresponds to the Cartesian product of polytopes if we represent a polytope via its slack matrix, which we now describe.  
 
Let $P = \conv(\{v_1,\dots, v_n\}) = \{x\in \mathbb{R}^d \mid Ax \leq b\}$, where $\{v_1,\dots,v_n\} \subseteq \R^d$, $A \in \R^{m \times d}$ and $b \in \R^m$. The \emph{slack matrix} associated to these descriptions of $P$ is the matrix $S \in \R^{m \times n}_+$ with $S_{i,j} := b_i-A_i v_j$.  That is, $S_{i,j}$ is the slack of point $v_j$ with respect to the inequality $A_i x \leq b_i$.
 
Slack matrices were introduced in a seminal paper of Yannakakis~\cite{yannakakis1991expressing}, as a tool for reasoning about the extension complexity of polytopes (see \cite{conforti2013extended}).

 
Our second main result is the following corollary to Theorem~\ref{thm:main1}.
 
\begin{thm} \label{thm:main2}
Given a polytope $P$ represented by its slack matrix $S \in \R^{m \times n}$, there is an algorithm that is polynomial in $m,n$ which correctly determines if $P$ is affinely equivalent to a Cartesian product $P_1 \times P_2$ and, in case it is, outputs two matrices $S_1, S_2$ such that $S_i$ is the slack matrix of $P_i$, for $i \in [2]$.
\end{thm}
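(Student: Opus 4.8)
The plan is to reduce the statement directly to Theorem~\ref{thm:main1} by exploiting the dictionary between Cartesian products of polytopes and $1$-products of slack matrices. First I would establish the geometric half of this dictionary: if $P$ is affinely equivalent to $P_1 \times P_2$, then any slack matrix of $P$ is isomorphic to $S_1 \otimes S_2$, where $S_i$ is a slack matrix of $P_i$. The point is that the vertices of $P_1 \times P_2$ are exactly the pairs $(v,w)$ with $v$ a vertex of $P_1$ and $w$ a vertex of $P_2$, while the facets of $P_1 \times P_2$ are partitioned into those coming from facets of $P_1$ (lifted to ignore the $P_2$-coordinates) and those coming from facets of $P_2$; the slack of $(v,w)$ against a lifted $P_1$-facet equals the slack of $v$ against that facet of $P_1$, and similarly for $P_2$. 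Matching up the index sets, the slack matrix of $P$ is precisely the $1$-product of the slack matrices of $P_1$ and $P_2$, so one direction of the equivalence is immediate. Affine equivalence does not change the slack matrix (up to positive row scalings, which one must be mildly careful about, but the relevant notion of isomorphism used in Theorem~\ref{thm:main1} is compatible with this).

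Next I would prove the converse, which is the part that actually needs an argument: if the given slack matrix $S$ is a $1$-product, i.e.\ isomorphic to $S_1 \otimes S_2$ for some non-empty $S_1, S_2$, then $P$ is affinely equivalent to a Cartesian product, and moreover one can recover slack matrices $S_1, S_2$ of the two factors. A classical fact about slack matrices (going back to Yannakakis, see the discussion around slack matrices in the excerpt) is that the combinatorics of $P$ — in particular, which affine space it lives in and which face lattice it has — is determined by $S$ up to the allowed symmetries; concretely, $P$ is affinely equivalent to the polytope obtained by taking the columns of $S$ as points and intersecting with the affine hull. So once $S \cong S_1 \otimes S_2$, the block structure of the row set (the rows split into an $S_1$-block and an $S_2$-block) forces the inequality description of $P$ to decompose, and one checks that the resulting two subsystems define polytopes $P_1, P_2$ with $P \cong P_1 \times P_2$ and with $S_1, S_2$ as their slack matrices. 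I should be slightly careful that $S_1$ and $S_2$, as output by the algorithm of Theorem~\ref{thm:main1}, are genuine slack matrices of polytopes and not merely abstract matrices; this is where I would invoke the characterization of slack matrices of polytopes, or simply observe that any submatrix of a slack matrix obtained by restricting to a subset of rows and all columns, after deleting repeated/redundant columns, is again a slack matrix of the corresponding (projected) polytope.

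The algorithm is then: run the algorithm of Theorem~\ref{thm:main1} on $S$; if it reports that $S$ is not a $1$-product, output that $P$ is not a Cartesian product; otherwise it returns $S_1, S_2$ with $S_1 \otimes S_2 \cong S$, and we output these, which by the discussion above are slack matrices of polytopes $P_1, P_2$ with $P \cong P_1 \times P_2$. Correctness is exactly the two-directional dictionary above, and the running time is polynomial because it is a single call to the polynomial-time algorithm of Theorem~\ref{thm:main1} plus polynomial post-processing (computing affine hulls, pruning redundant rows/columns). The main obstacle I anticipate is not the algorithmics but the bookkeeping in the converse direction: making precise that the abstract factors $S_1, S_2$ produced by Theorem~\ref{thm:main1} can be realized as slack matrices of actual polytopes and that gluing these polytopes via a Cartesian product reproduces $P$ up to affine equivalence — in other words, checking that the correspondence ``Cartesian product $\leftrightarrow$ $1$-product'' is tight in both directions and interacts correctly with the equivalence relations (affine equivalence of polytopes on one side, row/column permutation and positive scaling on the other).
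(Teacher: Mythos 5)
Your proposal takes essentially the same route as the paper: the paper obtains Theorem~\ref{thm:main2} as a direct corollary of the recognition algorithm of Theorem~\ref{thm:main1} combined with Lemma~\ref{lem:1-sum_slack}, which is exactly your ``Cartesian product $\leftrightarrow$ $1$-product'' dictionary, proved there via the Gouveia et al.\ characterization (Theorem~\ref{thm:char_slack-matrices}) and the identities $\aff(C_1\times C_2)=\aff(C_1)\times\aff(C_2)$, $\conv(C_1\times C_2)=\conv(C_1)\times\conv(C_2)$ for $C_i=\col(S_i)$. One caution: your fallback justification that \emph{any} row-submatrix of a slack matrix (after pruning columns) is again a slack matrix is false in general (e.g.\ deleting one row of the $3\times 3$ identity, the slack matrix of a triangle, leaves a matrix with $\conv(\col(\cdot))\subsetneq\aff(\col(\cdot))\cap\R_+^2$), so you must rely on your primary argument through the characterization and the product structure of the columns, which is precisely what the paper's proof of Lemma~\ref{lem:1-sum_slack} does.
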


Some comments are in order here. First, our algorithm determines whether a polytope $P$ is affinely
equivalent to a Cartesian product of two polytopes. As affine transformations do not preserve the property of being Cartesian product, this is a different problem than that of determining whether $P$ \emph{equals} $P_1\times P_2$ for some polytopes $P_1, P_2$.
%
%
%
Second, the definition of 1-\op{} can be extended to a more complex operation which we call 2-\op. Theorems~\ref{thm:main1} and \ref{thm:main2} can be extended to handle 2-\op{}s, see Theorem \ref{thm:2-sum_recog}.
 
Slack matrices are fascinating objects, that are not fully understood. For instance, given a matrix $S \in \R^{m \times n}_+$, the complexity of determining whether $S$ is the slack matrix of some polytope is open. In \cite{gouveia2013nonnegative}, the problem has been shown to be equivalent to the Polyhedral Verification Problem (see~\cite{kaibel2003some}): given a vertex description of a polytope $P$, and an inequality description of a polytope $Q$, determine whether $P=Q$.  
 
Polytopes that have a $0/1$-valued slack matrix are called \emph{$2$-level polytopes}. These form a rich class of polytopes including stable set polytopes of perfect graphs, Birkhoff, and Hanner polytopes (see~\cite{aprile2018thesis, aprile20182, macchia2018two} for more examples and details).  We conjecture that slack matrix recognition is polynomial for $2$-level polytopes.  
 
\begin{conj} \label{conj:2levelrec}
Given $S \in \{0,1\}^{m \times n}$, there is an algorithm that is polynomial in $m,n$ which correctly determines if $S$ is the slack matrix of a polytope.
\end{conj}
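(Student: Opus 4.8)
The plan is to attack Conjecture~\ref{conj:2levelrec} by the same two-phase strategy that underlies the corollary on $2$-level matroid base polytopes: first strip off all product structure using the recognition algorithms, then handle the remaining ``prime'' instances by structural means. Given $S\in\{0,1\}^{m\times n}$, I would start with the cheap necessary conditions for a slack matrix: $S$ has no zero or repeated rows or columns (otherwise normalize), the all-ones row vector lies in the row space of $S$, the column space of $S$ contains a strictly positive vector, and $\rk S-1$ equals the putative dimension $d$. Assuming these hold, one reconstructs the \emph{forced} candidate polytope: take a rank factorization $S=UW$ with $W\in\R^{(d+1)\times n}$ normalized so that its first row is the all-ones vector, read off candidate vertices $v_1,\dots,v_n$ (the remaining rows of $W$) and candidate inequalities $A_ix\le b_i$ (the rows of $U$, split as $[\,b\mid -A\,]$), so that $S_{i,j}=b_i-A_iv_j$ by construction. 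If $S$ is a slack matrix at all then this candidate is unique up to affine equivalence, and the whole question collapses to: is $\{A_ix\le b_i\}$ an irredundant description of $\conv\{v_1,\dots,v_n\}$? That is exactly the Polyhedral Verification Problem, which is why the unrestricted version of the recognition problem is not known to be polynomial.

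The point of restricting to $0/1$ matrices is that $2$-levelness is an extremely rigid constraint and should make verification tractable. I would use Theorem~\ref{thm:main1} and Theorem~\ref{thm:2-sum_recog} to test whether $S$ is a $1$-\op{} or a $2$-\op{}; if so, recurse on the factors. These factors are again $0/1$ (each column of a factor occurs as a sub-column of $S$), and one checks that $S$ is a $2$-level slack matrix if and only if all its factors are: a $1$-\op{} of slack matrices is the slack matrix of the corresponding Cartesian product, a $2$-\op{} corresponds to a glued product, and both operations preserve $2$-levelness (a facet of $P_1\times P_2$ is $F_1\times P_2$ or $P_1\times F_2$, and similarly for the glued product). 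This reduces Conjecture~\ref{conj:2levelrec} to its restriction to matrices that are prime with respect to both products. For the prime case one would seek a classification: show that prime $2$-level polytopes either carry a bounded parameter making brute-force verification polynomial, or fall into a short list of well-understood families (stable set polytopes of perfect graphs, Hanner polytopes, Birkhoff-type polytopes, and the like), on each of which the facet description can be checked directly. The matroid corollary is precisely this programme carried out when the matroid is given: $2$-level matroids are built from uniform matroids by matroid $1$- and $2$-sums, these correspond to $1$- and $2$-\op{}s of the associated slack matrices, and the base polytopes of uniform matroids (hypersimplices) are easy to verify.

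The main obstacle, and the reason the statement is only a conjecture, is the prime case: there is at present no structure theorem for prime $2$-level polytopes, and it is not even clear whether the Polyhedral Verification Problem restricted to inputs that could conceivably be $2$-level is polynomial. A sensible first target would therefore be to prove the conjecture for structurally restricted subclasses --- for instance $2$-level simplicial or simple polytopes, or polytopes all of whose proper faces are already certified $2$-level slack matrices by induction --- and, in parallel, to look for a purely matrix-theoretic certificate that a given $0/1$ matrix is or is not a slack matrix, in the spirit of the mutual-information/submodular-minimization argument used for Theorem~\ref{thm:main1}, so that the reduction to prime instances could be completed without reconstructing the polytope at all.
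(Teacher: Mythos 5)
There is a genuine gap here, and it is worth being explicit about its nature: the statement you were asked about is Conjecture~\ref{conj:2levelrec}, which the paper itself does not prove and explicitly leaves open. What you have written is a research programme, not a proof, and you say as much yourself in your last paragraph. The first phase of your plan --- normalize $S$, use Theorem~\ref{thm:char_slack-matrices} to see that the candidate polytope $\conv(\col(S))$ is forced up to affine equivalence, and then apply Theorems~\ref{thm:main1} and~\ref{thm:2-sum_recog} to peel off $1$-\op{} and $2$-\op{} structure --- is exactly the reduction the paper advertises (``our algorithm reduces the problem to irreducible instances''), and Theorem~\ref{thm:main3} is the one case where the paper completes the programme. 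But it completes it only because Theorem~\ref{thm:matroid-2-level} (Grande--Sanyal) guarantees that for $2$-level matroid base polytopes the irreducible pieces must be hypersimplex slack matrices $S_{d,k}$, which are trivially recognizable; no analogous structure theorem is known for general irreducible $2$-level polytopes, and your ``prime case'' step (``seek a classification \ldots or a bounded parameter making brute-force verification polynomial'') is precisely the missing idea, not a step you can currently carry out. Since the base case of your recursion is unresolved, the argument does not establish the conjecture.

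Two smaller technical cautions if you pursue this route. First, the factor-wise reduction for $2$-\op{}s is more delicate than ``$S$ is a slack matrix iff its factors are'': Lemma~\ref{lem:2-sum_slack}(ii) only yields that $S_1'=S_1+(\mathbf{1}-x_1)$ and $S_2'=S_2+(\mathbf{1}-y_1)$ are slack matrices, the converse direction needs the special rows to be $2$-level, and the recursion must also handle redundancy of the special row (Lemma~\ref{lem:k-sum_non-redundant}) and, as in the proof of Theorem~\ref{thm:main3}, a coherence/orientation choice among the special rows of the factors --- in the matroid case this is resolved by a tree $2$-coloring argument that has no obvious general analogue. Second, even granting the reduction, it is not known that the Polyhedral Verification Problem becomes easy when restricted to candidate $2$-level instances, so the ``rigidity of $2$-levelness'' you invoke is at this point a heuristic rather than a lemma. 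In short: your outline matches the paper's intended strategy and correctly locates the obstruction, but it does not constitute a proof of Conjecture~\ref{conj:2levelrec}, and neither does the paper claim one.
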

 
Conjecture~\ref{conj:2levelrec} seems hard to settle: however
it has been proven for certain restricted classes of $2$-level polytopes, most notably for stable set polytopes of perfect graphs \cite{aprile2018thesis}.  As a final result, we apply Theorem \ref{thm:main1} and its extension to 2-\op{}s to 
show that Conjecture~\ref{conj:2levelrec} holds for $2$-level matroid base polytopes (precise definitions will be given later).
 
\begin{thm} \label{thm:main3}
Given $S \in \{0,1\}^{m \times n}$, there is an algorithm that is polynomial in $m,n$ which correctly determines if $S$ is the slack matrix of a $2$-level matroid base polytope.
\end{thm}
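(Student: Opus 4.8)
The plan is to recognize slack matrices of $2$-level matroid base polytopes by recursively peeling off $1$-products and $2$-products until only slack matrices of hypersimplices remain; the latter are so rigid that they can be recognized directly. Correctness rests on two facts that I would establish in a preliminary section. The first is a structural characterization of $2$-level matroids: a matroid $M$ has a $2$-level base polytope if and only if $M$ can be obtained from uniform matroids by repeatedly forming direct sums and $2$-sums. The second is a dictionary between these matroid operations and matrix operations on slack matrices: if $M=M_1\oplus M_2$ then $B(M)=B(M_1)\times B(M_2)$, so a slack matrix of $B(M)$ is the $1$-product of slack matrices of $B(M_1)$ and $B(M_2)$ (essentially Theorem~\ref{thm:main2}); and if $M=M_1\oplus_2 M_2$ then, up to isomorphism, a slack matrix of $B(M)$ is the $2$-product of slack matrices of $B(M_1)$ and $B(M_2)$ — here the $2$-product operation of Theorem~\ref{thm:2-sum_recog} is tailored to match the matroid $2$-sum. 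Together with the observation that the base polytope of the uniform matroid $U_{k,n}$ is the hypersimplex $\Delta_{k,n}$, this reduces the problem to recognizing when a $0/1$ matrix is built from hypersimplex slack matrices by $1$-products and $2$-products.

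The algorithm I would design is recursive. Given $S\in\{0,1\}^{m\times n}$, and after disposing of degenerate cases, run the algorithm of Theorem~\ref{thm:main1}: if $S$ is a $1$-product $S_1\otimes S_2$, recurse on $S_1$ and $S_2$. Otherwise run the algorithm of Theorem~\ref{thm:2-sum_recog}: if $S$ is a $2$-product, recurse on the two pieces it outputs. If $S$ is neither, test directly whether $S$ is isomorphic to the slack matrix of some hypersimplex $\Delta_{k,n}$; the parameters $k,n$ are determined up to finitely many choices by the numbers of rows and columns of $S$, and each candidate can be verified in polynomial time using the combinatorial rigidity of $\Delta_{k,n}$ (for example, for $2\le k\le n-2$ its facets come in complementary pairs $x_i\ge 0$ and $x_i\le 1$, and its vertices are exactly the characteristic vectors of $k$-subsets of $[n]$), so no general matrix-isomorphism routine is needed. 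Accept $S$ if and only if every leaf of the recursion tree is a recognized hypersimplex slack matrix.

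For correctness I would separate soundness and completeness. Soundness: if the algorithm accepts, then reading the recursion tree bottom-up — uniform matroids at the leaves, direct sums at $1$-product nodes, $2$-sums at $2$-product nodes — the dictionary shows by induction that $S$ is a slack matrix of the base polytope of the resulting matroid $M$, which is $2$-level by the structural characterization. Completeness: if $S$ is a slack matrix of some matroid base polytope then, being $0/1$, that polytope is $2$-level, so $M$ is built from uniform matroids by direct sums and $2$-sums; if $M$ is disconnected then $B(M)$ is a nontrivial Cartesian product and $S$ is a $1$-product, if $M$ is connected but not $3$-connected then $M$ is a nontrivial $2$-sum and $S$ is a $2$-product, and if $M$ is $3$-connected then it is neither a nontrivial direct sum nor a nontrivial $2$-sum and must therefore be uniform, whence $S$ is a hypersimplex slack matrix; so the algorithm always takes a productive branch. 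To conclude that it accepts, I would invoke uniqueness: the prime factorization of a matrix under $1$-products is unique, and connected matroids admit a unique decomposition via $2$-sums, so — transported through the dictionary — every recursion path reaches the same set of pieces, which for a $2$-level matroid consists only of hypersimplex slack matrices. Finally, each call to Theorem~\ref{thm:main1} or~\ref{thm:2-sum_recog} runs in polynomial time and strictly shrinks the instance, the recursion tree has polynomially many nodes, and the hypersimplex test is polynomial, so the whole algorithm runs in polynomial time.

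The step I expect to be the main obstacle is pinning down the $2$-sum side of the dictionary in exactly the form completeness requires: not only that a slack matrix of $B(M_1\oplus_2 M_2)$ is the $2$-product of slack matrices of $B(M_1)$ and $B(M_2)$, but also, conversely, that any $1$-product or $2$-product factor of a slack matrix of a $2$-level matroid base polytope is again a slack matrix of such a polytope, so that the greedy recursion never needs to backtrack over the choice of decomposition. This is intertwined with the uniqueness statements (unique prime factorization under $1$-products, and the classical unique $2$-sum decomposition of connected matroids, transferred to slack matrices). The remaining ingredients — the structural characterization of $2$-level matroids, the direct-sum half of the dictionary, and the direct recognition of hypersimplex slack matrices — I expect to be comparatively routine.
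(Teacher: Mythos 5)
Your strategy coincides with the paper's: invoke the characterization of $2$-level base polytopes (Theorem~\ref{thm:matroid-2-level}), peel off $1$-products and $2$-products with the algorithms of Theorems~\ref{thm:main1} and~\ref{thm:2-sum_recog}, and recognize the hypersimplex slack matrices $S_{d,k}$ at the leaves. However, the two ingredients you defer as ``the main obstacle'' are precisely where the proof's substance lies, and they are not routine transfers of uniqueness statements. The paper must prove genuine converses: if a slack matrix of $B(M)$ is a $1$-product then $M$ is a $1$-sum and the factors are slack matrices of the summands' base polytopes (Lemma~\ref{lem:1-\op_matroid}), and if a slack matrix of a \emph{connected} $B(M)$ is a $2$-product with one factor equal to some $S_{d,k}$ then $M=U_{d,k}\oplus_2 M_2$ and the other factor is a slack matrix of $B(M_2)$ (Lemma~\ref{lem:2-\op_matroid}); the latter requires explicitly constructing the candidate basis family and verifying the basis-exchange axiom in several cases. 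Moreover, your completeness step ``if $M$ is a nontrivial $2$-sum then $S$ is a $2$-product'' silently assumes that the relevant special row actually appears in the given slack matrix; the paper needs the preprocessing assumption that all non-negativity rows are present (justified via facet-definingness of $x_e\ge 0$ or $x_e\le 1$ and the fact that adding the complement of a $2$-level row preserves slack matrices) together with Lemma~\ref{lem:k-sum_non-redundant}, which rules out the special row being redundant and hence invisible.

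There is also a soundness gap you miss entirely: accepting whenever every leaf is some $S_{d_i,k_i}$ is not a correct criterion. The matrix $2$-product corresponds to intersecting $B(M_1)\times B(M_2)$ with $x_1=y_1$, whereas the matroid $2$-sum corresponds to $x_p+y_p=1$ (Lemma~\ref{obs:2sumpolydescription}); so a $2$-product node encodes a matroid $2$-sum only when the two special rows are \emph{coherent}, i.e.\ one is a row $x_p\ge 0$ of one factor and the other a row $y_p\le 1$ of the other. Each non-identity leaf $S_{d,k}$ can be read as the slack matrix of $U_{d,k}$ or of its dual, which flips the types of all its rows at once, so the choices interact along the whole decomposition tree, and identity-matrix leaves admit no choice. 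The paper therefore concludes with a $2$-coloring feasibility check on the tree of factors and accepts if and only if a globally coherent assignment exists; without it, your algorithm could accept $0/1$ matrices that do decompose into hypersimplex slack matrices via $2$-products but are slack matrices of no matroid base polytope (by Lemma~\ref{lem:2-sum_slack} they are slack matrices of \emph{some} polytope, just not a base polytope). Your bottom-up soundness induction cannot be closed until both this coherence bookkeeping and the converse lemmas above are in place.
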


 {\bfseries{Paper Outline.}}
In Section \ref{sec:def} we study the properties of 1-\op{}s and 2-\op{}s in terms of slack matrices, proving Lemmas \ref{lem:1-sum_slack}, \ref{lem:2-sum_slack}. In Section \ref{sec:algorithms} we give algorithms to efficiently recognize 1-\op{}s and 2-\op{}s (Theorems \ref{thm:main1}, \ref{thm:2-sum_recog}), as well as showing a unique decomposition result for 1-\op{}s (Lemma \ref{lem:unique_decomposition}).
Finally, in Section \ref{sec:slackmatroids} we apply the previous results to slack matrices of matroid base polytopes, obtaining Theorem \ref{thm:main3}.

The results presented in this paper are contained in the PhD thesis of the first author \cite{aprile2018thesis}, to which we refer for further details.

\section{Properties of 1-\op{}s and 2-\op{}s}\label{sec:def}

Here we study the $1$-\op{} of matrices defined above in the introduction, as well as the $2$-\op{}.
We remark that the notion of $2$-\op{} and the related results can be generalized to $k$-\op{}s for every $k \geq 3$ (see~\cite{aprile2018thesis} for more details). The $k$-\op{} operation  is similar to that of glued product of polytopes in~\cite{margot1995composition}, except that the latter is defined for 0/1 polytopes, while we deal with general matrices.

We show that, under certain assumptions, the operations of $1$- and $2$-\op{}  preserve the property of being a slack matrix.
We recall the following characterization of slack matrices, due to~\cite{gouveia2013nonnegative}. 

We will denote the set of column vectors of a matrix $S$ by $\col(S)$. 

\begin{thm}[Gouveia~\emph{et al.} \cite{gouveia2013nonnegative}]\label{thm:char_slack-matrices}
Let $S\in \R^{m \times n}$ be a nonnegative matrix of rank at least 2. Then $S$ is the slack matrix of a polytope if and only if $\conv(\col(S)) = \aff(\col(S)) \cap \R_+^m$. Moreover, if $S$ is the slack matrix of polytope $P$ then $P$ is affinely equivalent to $\conv(\col(S))$.
\end{thm}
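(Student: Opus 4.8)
The plan is to prove both implications directly from the definition of slack matrix, with all of the geometric content carried by the affine map $\psi\colon\R^d\to\R^m$, $\psi(x):=b-Ax$, which turns a point of the polytope into the corresponding column of the slack matrix. For the forward implication, suppose $S$ is the slack matrix associated to $P=\conv(\{v_1,\dots,v_n\})=\{x\in\R^d\mid Ax\le b\}$, so that $S^j=b-Av_j=\psi(v_j)$. Since $\{x\mid Ax\le b\}$ is bounded, $A$ has trivial kernel, hence $\psi$ is injective; being affine and sending each $v_j$ to $S^j$, it restricts to an affine isomorphism from $\aff(P)$ onto $\aff(\col(S))$ carrying $P$ onto $\conv(\col(S))$, which already yields the ``moreover'' statement. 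Now $\conv(\col(S))=\psi(P)\subseteq\R_+^m$ because all slacks are nonnegative, and trivially $\conv(\col(S))\subseteq\aff(\col(S))$; conversely, given $z\in\aff(\col(S))\cap\R_+^m$, write $z=\psi(x)$ for the unique $x\in\aff(P)$, and then $b-Ax=z\ge 0$ gives $Ax\le b$, so $x\in P$ and $z\in\psi(P)=\conv(\col(S))$. This proves $\conv(\col(S))=\aff(\col(S))\cap\R_+^m$.

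For the converse, assume $\conv(\col(S))=\aff(\col(S))\cap\R_+^m=:P$; I want to realize $S$ as a slack matrix. Inside $\R^m$ one has $P=\{x\mid Cx=c,\ x_i\ge 0\ \text{for all }i\}$, where $Cx=c$ defines $\aff(\col(S))$, and taking $S^1,\dots,S^n$ as the list of points recovers $\conv$. The subtlety is that this $\R^m$-description carries the equalities $Cx=c$, which would contribute zero rows and not reproduce $S$ exactly, so I pass to the affine quotient. Choose an affine isomorphism $\phi$ from $\aff(\col(S))$ onto $\R^d$, where $d=\dim\aff(\col(S))\ge 1$ (this is the only place $\rk S\ge 2$ is used, to exclude the degenerate case where $\conv(\col(S))$ is a single point), with inverse written $\phi^{-1}(y)=My+p$. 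Put $w_j:=\phi(S^j)$ and $Q:=\conv(\{w_1,\dots,w_n\})=\phi(P)\subseteq\R^d$. Pulling each constraint $x_i\ge 0$ back through $\phi^{-1}$ gives the affine inequality $-M_iy\le p_i$; the system of these $m$ inequalities cuts out exactly $\phi(\aff(\col(S))\cap\R_+^m)=\phi(P)=Q$ by hypothesis, and the slack of $w_j$ in the $i$-th inequality equals $p_i+M_iw_j=(\phi^{-1}(\phi(S^j)))_i=S_{i,j}$. Hence $S$ is the slack matrix associated to the descriptions $Q=\conv(\{w_1,\dots,w_n\})=\{y\in\R^d\mid -M_iy\le p_i\ \text{for all }i\}$.

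I expect the only genuinely delicate point to be the bookkeeping in the converse direction just sketched: one must argue in the affine quotient $\R^d$ rather than inside $\R^m$, precisely so as not to produce $S$ with spurious zero rows coming from the equalities that define $\aff(\col(S))$. Everything else is a routine unwinding of the definition of slack matrix together with the elementary fact that a bounded polyhedron $\{x\mid Ax\le b\}$ forces $\ker A=0$.
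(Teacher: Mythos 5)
This statement is quoted from Gouveia et al.\ \cite{gouveia2013nonnegative} and the paper gives no proof of it, so there is nothing internal to compare against; judged on its own, your argument is correct and is essentially the standard proof of this characterization. The forward direction via the affine map $\psi(x)=b-Ax$ is sound (boundedness of $\{x\mid Ax\le b\}$ together with nonemptiness of $P$ does force $\ker A=\{0\}$, so $\psi$ is injective and carries $P$, $\aff(P)$ onto $\conv(\col(S))$, $\aff(\col(S))$, giving both the equality with $\aff(\col(S))\cap\R^m_+$ and the ``moreover'' clause), and in the converse you correctly handle the one delicate point: working in an affine chart $\phi$ of $\aff(\col(S))$ so that the pulled-back constraints $-M_iy\le p_i$ cut out exactly $\phi(\conv(\col(S)))$ and reproduce the entries $S_{i,j}$ as slacks, with $\rk S\ge 2$ only needed to guarantee $\dim\aff(\col(S))\ge 1$. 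Note also that the paper's definition of slack matrix does not require irredundant descriptions, so your construction of $Q$ with possibly redundant inequalities is legitimate.
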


Throughout the paper, we will assume that the matrices we deal with are of rank at least $2$, so to apply Theorem \ref{thm:char_slack-matrices} directly. 


We point out that the slack matrix of a polytope $P$ is not unique, as it depends on the given descriptions of $P$. We say that a slack matrix is \emph{non-redundant} if its rows bijectively correspond to the facets of $P$ and its columns bijectively correspond to the vertices of $P$. In particular, non-redundant slack matrices do not contain two identical rows or columns, nor rows or columns which are all zeros, or all non-zeros. They are unique up to permuting rows and columns, and scaling rows with positive reals.

\subsection{$1$-\op{}s}

We show that the 1-\op{} operation preserves the property of being a slack matrix. 

\begin{lem}\label{lem:1-sum_slack}
Let $S\in\R_+^{m\times n}$ and let $S_i\in\R_+^{m_i\times n_i}$ for $i \in [2]$ such that $S=S_1\otimes S_2$. Matrix $S$ is the slack matrix of a polytope $P$ if and only if there exist polytopes $P_i$, $i \in [2]$ such that $S_i$ is the slack matrix of $P_i$ and $P$ is affinely equivalent to $P_1 \times P_2$.
\end{lem}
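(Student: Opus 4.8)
The plan is to use the Gouveia \emph{et al.} characterization (Theorem~\ref{thm:char_slack-matrices}) in both directions, exploiting the fact that the columns of $S = S_1 \otimes S_2$ are exactly the concatenations $\binom{v}{w}$ with $v \in \col(S_1)$, $w \in \col(S_2)$, so that $\col(S) = \col(S_1) \times \col(S_2)$ as a set of points in $\R^{m_1} \times \R^{m_2} = \R^m$. The first observation to record is that taking convex hulls commutes with Cartesian products, $\conv(\col(S_1) \times \col(S_2)) = \conv(\col(S_1)) \times \conv(\col(S_2))$, and similarly for affine hulls, $\aff(\col(S_1) \times \col(S_2)) = \aff(\col(S_1)) \times \aff(\col(S_2))$. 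Since $\R_+^m = \R_+^{m_1} \times \R_+^{m_2}$, the set $\aff(\col(S)) \cap \R_+^m$ also splits as a product of $\aff(\col(S_i)) \cap \R_+^{m_i}$. Hence the equality $\conv(\col(S)) = \aff(\col(S)) \cap \R_+^m$ holds if and only if $\conv(\col(S_i)) = \aff(\col(S_i)) \cap \R_+^{m_i}$ for both $i$; here one uses that a product of polyhedra equals another product of (nonempty) polyhedra only if the factors coincide. This is the algebraic heart of the argument.

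For the ``if'' direction, assume $S_i$ is the slack matrix of $P_i$ for $i \in [2]$. By the ``moreover'' part of Theorem~\ref{thm:char_slack-matrices}, $P_i$ is affinely equivalent to $\conv(\col(S_i))$, so $P_1 \times P_2$ is affinely equivalent to $\conv(\col(S_1)) \times \conv(\col(S_2)) = \conv(\col(S))$. Each $S_i$ satisfies the characterization equality, hence by the splitting observation $S$ does too, so $S$ is a slack matrix of some polytope $P$; and again by the ``moreover'' clause $P$ is affinely equivalent to $\conv(\col(S))$, which is affinely equivalent to $P_1 \times P_2$. One small point to check is the rank hypothesis: we are assuming throughout that matrices have rank at least $2$, and I would note that if $S = S_1 \otimes S_2$ has rank $\ge 2$ then at least one $S_i$ has rank $\ge 2$; the case where one factor, say $S_2$, has rank $\le 1$ needs a brief separate treatment (a rank-$1$ nonnegative $S_2$ with no zero or all-equal rows corresponds to a point, i.e.\ $P_2$ a single vertex, and the statement degenerates gracefully) — I would handle this either by the normalization to non-redundant slack matrices or by a direct check.

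For the ``only if'' direction, assume $S$ is the slack matrix of $P$. Then $S$ satisfies $\conv(\col(S)) = \aff(\col(S)) \cap \R_+^m$, so by the splitting observation each $S_i$ satisfies $\conv(\col(S_i)) = \aff(\col(S_i)) \cap \R_+^{m_i}$; since $S_i$ is nonnegative (it is a submatrix of blocks of $S$) and we may assume rank at least $2$ (modulo the degenerate case above), Theorem~\ref{thm:char_slack-matrices} says $S_i$ is the slack matrix of a polytope $P_i$ affinely equivalent to $\conv(\col(S_i))$. Then $P_1 \times P_2 \sim \conv(\col(S_1)) \times \conv(\col(S_2)) = \conv(\col(S)) \sim P$, giving the claimed affine equivalence.

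The main obstacle I anticipate is not any single step but getting the product-splitting of the three set operations — $\conv$, $\aff$, and intersection with the nonnegative orthant — cleanly and simultaneously, and in particular the converse implication ``product of polytopes $=$ product of polyhedra $\Rightarrow$ factors agree,'' which is where nonemptiness of all factors is essential. The rank-$\le 1$ degenerate case is the only other thing requiring care, and it is genuinely minor; everything else is bookkeeping with affine maps.
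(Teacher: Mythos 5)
Your proposal is correct and follows essentially the same route as the paper: reduce via Theorem~\ref{thm:char_slack-matrices} and $\col(S_1\otimes S_2)=\col(S_1)\times\col(S_2)$ to the identities $\aff(C_1\times C_2)=\aff(C_1)\times\aff(C_2)$ and $\conv(C_1\times C_2)=\conv(C_1)\times\conv(C_2)$, which the paper proves by exactly the multiplier computation underlying the facts you cite as standard. The rank-$\le 1$ degeneracy you flag is sidestepped in the paper by its blanket standing assumption that all matrices considered have rank at least~$2$.
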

 \begin{proof}
For $i \in [2]$, let $C_i := \col(S_i)$. From Theorem \ref{thm:char_slack-matrices}, and since $\col(S_1 \otimes S_2) = \col(S_1) \times \col(S_2) = C_1 \times C_2$, it suffices to prove (i) 
\(
 \aff(C_1 \times C_2) = \aff(C_1) \times \aff(C_2)
\)
and (ii)
\(
\conv(C_1 \times C_2) = \conv(C_1) \times \conv(C_2).
\)

We first prove (i). 
We have $\aff(C_1 \times C_2) \subseteq \aff(C_1) \times \aff(C_2)$ since the right-hand side is an affine subspace containing $C_1 \times C_2$. 
Now, we prove $\aff(C_1) \times \aff(C_2) \subseteq \aff(C_1 \times C_2)$. Take $p = (x,y) \in \aff(C_1) \times \aff(C_2)$. Then $x$ is an affine combination $\sum_i \lambda_i x_i = x$ of points $x_i \in C_1$. Similarly, $y$ is an affine combination $\sum_j \mu_j y_j = y$ of points $y_j \in C_2$. Thus we can write $p$ as
$$
(x,y) = \Big(\sum_i \lambda_i x_i, \sum_j \mu_j y_j\Big) 
= \Big(\sum_i \lambda_i \overbrace{(\sum_j \mu_j)}^{=1} x_i, \sum_j \overbrace{(\sum_i \lambda_i)}^{=1} \mu_j y_j\Big)
= \sum_{i,j} \lambda_i\mu_j (x_i,y_j),
$$
where $\sum_{i,j} \lambda_i\mu_j = \left(\sum_i \lambda_i\right) \left(\sum_j\mu_j\right) = 1$. Hence, $p \in \aff(C_1 \times C_2)$.
Moreover, if $\mu_j\geq 0, \lambda_i\geq 0$ for all $i,j$, then the multipliers are all non-negative, which proves (ii).
\end{proof}

\subsection{2-\op{}s}\label{sec:2prod}

We now define the operation of 2-\op, and show that, under certain natural assumptions, it also preserves the property of being a slack matrix.

Consider two real matrices $S_1,S_2$, and assume that $S_1$ (resp.~$S_2$) has a 0/1 row $x_1$ (resp.~$y_1$), that is, a row whose entries are 0 or 1 only. We call $x_1, y_1$ \emph{special} rows. For any matrix $M$ and row $r$ of $M$, we denote by $M-r$ the matrix obtained from $M$ by removing row $r$. The row $x_1$ determines a partition of $S_1-x_1$ into two submatrices according to its 0 and 1 entries: we define $S_1^0$ to be the matrix obtained from $S_1$ by deleting the row $x_1$ and all the columns whose $x_1$-entry is 1, and $S_1^1$ is defined analogously. Thus,
 
\begin{center}
\begin{tikzpicture}[every left delimiter/.style={xshift=.5em},
    every right delimiter/.style={xshift=-.5em}]
\matrix(S)[matrix of math nodes,left delimiter={(},right
delimiter={)},column sep=-\pgflinewidth,row sep=-\pgflinewidth,nodes={anchor=center,text width=1.2cm,align=center,inner sep=0pt,text depth=0},
row 1/.style={nodes={minimum height=4mm}},row 2/.style={minimum height=6mm}]
{
S_1^0 & S_1^1 \\
 0 \cdots 0 & 1 \cdots 1 \\
};
\draw(S-1-1.north east) -- (S-2-1.south east);
\node [left= of S.center, xshift=-5mm] {$S_1 = $};
\node [right,xshift=1.5mm,yshift=-.5mm] at (S-2-2.east) {$\leftarrow x_1\,.$};
\end{tikzpicture}
\end{center}

Similarly, $y_1$ induces a partition of $S_2-y_1$ into $S_2^0,S_2^1$. Here we assume that none of $S_1^0, S_1^1, S_2^0,S_2^1$ is empty, that is, we assume that the special rows contain both 0's and 1's.


The \emph{$2$-\op} of $S_1 \in \R^{m_1 \times n_1}$ with special row $x_1$ and $S_2 \in \R^{m_2 \times n_2}$ with special row $y_1$ is defined as:
$$
S = (S_1,x_1) \otimes_2 (S_2,y_1) := \left(\begin{array}{c|c}
S_1^0 \otimes S_2^0 & S_1^1 \otimes S_2^1 \\
 0 \cdots 0 & 1 \cdots 1
\end{array}
\right)\,.
$$ 

Similarly as before, we say that $S$ is a 2-\op{} if there exist matrices $S_1, S_2$ and 0/1 rows $x_1$ of $S_1$, $y_1$ of $S_2$, such that $S$ is isomorphic to $(S_1,x_1)\otimes_2 (S_2,y_1)$. Again, we will abuse notation and write $S=(S_1,x_1)\otimes_2 (S_2,y_1)$.

For a polytope $P$ with slack matrix $S$, consider a row $r$ of $S$ corresponding to an inequality $a^\intercal x\leq b$ that is valid for $P$. We say that $r$ is \emph{2-level} with respect to $S$, and that 
$a^\intercal x \leq b$ is \emph{2-level} with respect to $P$, if there exists a real $b' < b$ such that all the vertices of $P$ either lie on the hyperplane $\{x \mid a^\intercal x = b\}$ or the hyperplane $\{x \mid a^\intercal x = b'\}$. 

We notice that, if $r$ is 2-level, then $r$ can be assumed to be 0/1 after scaling. Moreover, adding to $S$ the row $\mathbf{1} - r$ (that is, the complement of 0/1 row $r$) gives another slack matrix of $P$. Indeed, such row corresponds to the valid inequality $a^\intercal x \geq b'$. 

The latter observation is crucial for our next lemma: we show that, if the special rows are chosen to be 2-level, the operation of 2-\op{} essentially preserves the property of being a slack matrix. We remark that having a 2-level row is a quite natural condition. For instance, for 0/1 polytopes, any non-negativity constraint yields a 2-level row in the corresponding slack matrix. By definition, all facet-defining inequalities of a 2-level polytope are 2-level.
Finally, we would like to mention that the following result could be derived from results from \cite{margot1995composition} (see also \cite{conforti2016projected}), but we give here a new, direct proof.

\begin{lem}\label{lem:2-sum_slack}
Let $S\in\R_+^{m\times n}$ and let $S_i\in\R_+^{m_i\times n_i}$ for $i \in [2]$ such that $S=(S_1,x_1)\otimes_2 (S_2,y_1)$ for some 2-level rows $x_1$ of $S_1$, $y_1$ of $S_2$. The following hold:
\begin{enumerate}[(i)]
\item If both $S_1$ and $S_2$ are slack matrices, then $S$ is a slack matrix.
\item If $S$ is a slack matrix, let $S'_1 := S_1 +(\mathbf{1}-x_1)$ (that is, $S_1$ with the additional row $\mathbf{1}-x_1$), and similarly let $S'_2 := S_2 +(\mathbf{1}-y_2)$. Then both $S_1'$ and $S_2'$ are slack matrices.
\end{enumerate}
\end{lem}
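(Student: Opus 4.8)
The plan is to reduce everything to the Gouveia et al.\ characterization (Theorem~\ref{thm:char_slack-matrices}), exactly as in the proof of Lemma~\ref{lem:1-sum_slack}, but now keeping careful track of the affine hyperplane carved out by the special rows. Write $C_1^0 := \col(S_1^0)$, $C_1^1 := \col(S_1^1)$ and similarly $C_2^0, C_2^1$; note each column of $S=(S_1,x_1)\otimes_2(S_2,y_1)$ is, up to the trailing special coordinate, either a point of $C_1^0\times C_2^0$ (with special coordinate $0$) or a point of $C_1^1\times C_2^1$ (with special coordinate $1$). So $\col(S)$, after dropping the last coordinate, is $(C_1^0\times C_2^0)\,\cup\,(C_1^1\times C_2^1)$, and the last coordinate is the indicator of which part we are in. The key observation to record first is that, because $x_1$ is 2-level for $S_1$, the points of $C_1^1$ all lie on a hyperplane $H_1=\{a_1^\intercal z = \beta_1\}$ and the points of $C_1^0$ all lie on a parallel hyperplane $\{a_1^\intercal z = \beta_1'\}$ with $\beta_1'<\beta_1$; symmetrically for $C_2^0, C_2^1$. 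In particular $\aff(C_1^0), \aff(C_1^1)$ are parallel affine subspaces, and likewise on side $2$.

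For part~(i): assume $S_1,S_2$ are slack matrices. First observe that $S_1' = S_1 + (\mathbf 1 - x_1)$ and $S_2' = S_2 + (\mathbf 1 - y_1)$ are again slack matrices (the new row is the valid inequality $a^\intercal x \ge b'$ coming from 2-levelness, as remarked before the lemma); so WLOG I may assume $S_1$ already contains the complementary row $\mathbf 1 - x_1$, and likewise $S_2$. Now I want to exhibit $\aff(\col(S))$ and $\conv(\col(S))$ and check $\conv(\col S) = \aff(\col S)\cap \R_+^{m}$. For the affine hull: I claim $\aff(\col(S))$ is the set of points $(z_1,z_2,t)$ with $z_1 \in \aff(C_1^0\cup C_1^1)$, $z_2\in\aff(C_2^0\cup C_2^1)$, and $t$ determined affinely by $z_1$ (equivalently by $z_2$) via the 2-level structure — concretely $t = (a_1^\intercal z_1 - \beta_1')/(\beta_1-\beta_1')$, and the consistency constraint that this equals the analogous expression in $z_2$. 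This is a routine affine-combination computation like the one in Lemma~\ref{lem:1-sum_slack}: any affine combination of columns of $S$ splits into a part from the $0$-block and a part from the $1$-block, the special coordinate reads off the total weight on the $1$-block, and the constraint linking the two sides is exactly that both the $z_1$- and $z_2$-parts report the same $1$-block weight. For the nonnegative part: take $(z_1,z_2,t)\in\aff(\col S)\cap\R_+^m$. From $S_1$ being a slack matrix (containing row $\mathbf 1 - x_1$) and Theorem~\ref{thm:char_slack-matrices}, $z_1$ lies in $\conv(C_1^0\cup C_1^1)$; since the $x_1$-coordinate of $z_1$ equals $t$ and the $\mathbf 1 - x_1$-coordinate equals $1-t$, and these are nonnegative, $t\in[0,1]$; moreover $z_1$ is a convex combination of $C_1^0\cup C_1^1$ with total weight $t$ on $C_1^1$, hence $z_1 = t\,\bar z_1^1 + (1-t)\bar z_1^0$ with $\bar z_1^1\in\conv(C_1^1)$, $\bar z_1^0\in\conv(C_1^0)$ — here I use convexity of each block together with the fact that the blocks live on distinct parallel hyperplanes so the split of weights is forced. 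Symmetrically $z_2 = t\,\bar z_2^1 + (1-t)\bar z_2^0$. Then $(z_1,z_2,t) = t\,(\bar z_1^1,\bar z_2^1,1) + (1-t)\,(\bar z_1^0,\bar z_2^0,0)$, and by Lemma~\ref{lem:1-sum_slack}-style reasoning (or directly) $(\bar z_1^1,\bar z_2^1)\in\conv(C_1^1\times C_2^1)$ and $(\bar z_1^0,\bar z_2^0)\in\conv(C_1^0\times C_2^0)$, so the whole point is in $\conv(\col S)$. The reverse inclusion $\conv(\col S)\subseteq\aff(\col S)\cap\R_+^m$ is immediate since $S\ge 0$.

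For part~(ii): assume $S$ is a slack matrix; I must show $S_1' = S_1 + (\mathbf 1 - x_1)$ and $S_2' = S_2 + (\mathbf 1 - y_1)$ are slack matrices. By Theorem~\ref{thm:char_slack-matrices} it suffices to check $\conv(\col S_1') = \aff(\col S_1')\cap\R_+^{m_1+1}$ (and symmetrically). The columns of $S_1'$ are the points of $C_1^0$ with special pair $(0,1)$ appended and the points of $C_1^1$ with $(1,0)$ appended. The inclusion $\subseteq$ is trivial. For $\supseteq$, take $(z_1,t,1-t)$ nonnegative in $\aff(\col S_1')$; so $t\in[0,1]$ and $z_1\in\aff(C_1^0\cup C_1^1)$ with the $x_1$-coordinate equal to $t$. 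I want $z_1\in\conv(C_1^0\cup C_1^1)$. The idea is to ``lift'' $z_1$ to a point of $\aff(\col S)\cap\R_+^m$ and pull back: pick any $z_2$ in the relative interior of $\conv(C_2^0\cup C_2^1)$ with its $y_1$-coordinate also equal to $t$ — such a $z_2$ exists because $y_1$ is 2-level, so the $y_1$-coordinate ranges over all of $[0,1]$ on $\conv(C_2^0\cup C_2^1)$ — then $(z_1,z_2,t)\in\aff(\col S)$ by the description of $\aff(\col S)$ from part~(i), and it is nonnegative, hence lies in $\conv(\col S) = \conv\big((C_1^0\times C_2^0\times\{0\})\cup(C_1^1\times C_2^1\times\{1\})\big)$; projecting onto the $S_1$-coordinates gives $z_1\in\conv(C_1^0\cup C_1^1)$, as desired. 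The one point needing care is whether $S$ being a slack matrix really forces $\aff(\col S)$ to have the product-with-linked-special-coordinate shape claimed in part~(i) even without assuming $S_1,S_2$ are slack matrices; this follows purely from the combinatorial shape of the $2$-\op{} (the affine-combination bookkeeping does not use the Gouveia condition), so it is available here too.

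\textbf{Main obstacle.} The delicate step is the forced splitting of weights: arguing that a convex (or affine) combination of $C_i^0\cup C_i^1$ with a prescribed total weight $t$ on the $1$-block decomposes as $t\cdot(\text{point of }\conv C_i^1) + (1-t)\cdot(\text{point of }\conv C_i^0)$, \emph{and} that the $2$-\op{} point $(z_1,z_2,t)$ reassembles into a genuine convex combination of $C_1^1\times C_2^1$ and $C_1^0\times C_2^0$ with the \emph{same} split parameter $t$ on both sides. This is exactly where 2-levelness is used — the two blocks on each side lie on distinct parallel hyperplanes, so the coordinate $a_i^\intercal z$ (equivalently the special coordinate) pins down $t$ unambiguously and consistently across the two sides — and it is the place a naive attempt would break. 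Everything else is the same affine-algebra bookkeeping as in Lemma~\ref{lem:1-sum_slack}.
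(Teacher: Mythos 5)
Your part (ii) is essentially the paper's argument: lift a nonnegative point of $\aff(\col(S_1'))$ to a nonnegative point of $\aff(\col(S))$ by attaching data from the $S_2$ side with the same special coordinate, apply Theorem~\ref{thm:char_slack-matrices} to $S$, and project the resulting convex combination back to $\col(S_1')$. The paper does the lifting per column: each $v_i$ with special entry $a$ is sent to the column of $S$ obtained by appending a fixed column $c_a$ of $S_2^a$; this makes membership of the lifted point in $\aff(\col(S))$ immediate and avoids the general description of $\aff(\col(S))$ that you assert. If you keep your version, that description needs a proof: the direction you use (two affine points sharing the special coordinate $t$ assemble into a point of $\aff(\col(S))$) is not quite the Lemma~\ref{lem:1-sum_slack} computation, since only pairs with matching special entries are columns of $S$; one has to take coefficients $\lambda^1_a\mu^1_b/t$ on the $1$-block and $\lambda^0_a\mu^0_b/(1-t)$ on the $0$-block, with a separate (easy) argument when $t\in\{0,1\}$. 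Also, asking for $z_2$ in the relative interior is unnecessary and in fact unattainable when $t\in\{0,1\}$; any point of $\conv(\col(S_2))$ with $y_1$-coordinate $t$, e.g.\ $t\,c_1+(1-t)\,c_0$ for columns $c_a$ of $S_2^a$, suffices.

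For part (i) you take a genuinely different route: the paper exhibits the polytope, showing $S$ is a slack matrix of $(P_1\times P_2)\cap H$ with $H=\{x_1=y_1\}$ by proving that $H$ cuts no edge of $P_1\times P_2$ in its interior (so no new vertices arise), whereas you verify the Gouveia condition for $S$ directly by splitting convex combinations according to the $0/1$ special coordinate. Your route does work, and its virtue is that (i) and (ii) become the same kind of bookkeeping; the paper's is shorter and identifies the polytope explicitly. However, your opening reduction is a misstep as written: ``WLOG $S_1$ contains $\mathbf{1}-x_1$'' changes $S$ itself (the $2$-\op{} of the enlarged factors is $S$ with two extra rows equal to the complement of its special row), and deducing that the original $S$ is a slack matrix from the enlarged one is exactly the question of whether a nonnegative point of $\aff(\col(S))$ can have special coordinate greater than $1$ --- the very fact you invoked the complementary row to exclude, so the reduction is circular. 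The fix is simple and makes the detour unnecessary: project $(z_1,z_2,t)\in\aff(\col(S))\cap\R_+^m$ to the $S_1$-coordinates to get $(z_1,t)\in\aff(\col(S_1))\cap\R_+^{m_1}$; Theorem~\ref{thm:char_slack-matrices} applied to $S_1$ (a slack matrix by hypothesis) places it in $\conv(\col(S_1))$, and since the $x_1$-entries of the columns are $0/1$ this forces $t\in[0,1]$ with total weight exactly $t$ on the $1$-block, after which your splitting and reassembly go through verbatim.
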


\begin{proof}
(i) Let $P_i := \conv(\col(S_i)) \subseteq \R^{m_i}$ for $i \in [2]$. Recall that $S_i$ is the slack matrix of $P_i$, by Theorem~\ref{thm:char_slack-matrices}. Without loss of generality, $x_1$ and $y_1$ can be assumed to be the first rows of $S_1, S_2$ respectively. We overload notation and denote by $x_1$ the first coordinate of $x$ as a point in $\R^{m_1}$, and similarly for $y \in \R^{m_2}$. Let $H$ denote the hyperplane of $\R^{m_1+m_2}$ defined by the equation $x_1 = y_1$.

We claim that $S$ is a slack matrix of the polytope $(P_1 \times P_2) \cap H$. By Lemma \ref{lem:1-sum_slack}, $S$ is a submatrix of the slack matrix of $(P_1 \times P_2) \cap H$. But the latter might have some extra columns: hence we only need to show that intersecting $P_1 \times P_2$ with $H$ does not create any new vertex.

To this end we notice that no new vertex is created if and only if there is no edge $e$ of $P_1 \times P_2$ such that $H = \{(x,y) \mid x_1 = y_1\}$ intersects $e$ in its interior. Let $e$ be an edge of $P_1 \times P_2$, and let $(v_1,v_2)$ and $(w_1,w_2)$ denote its endpoints, where $v_1, w_1 \in \col(S_1)$ and $v_2,w_2 \in \col(S_2)$. 
By a well-known property of the Cartesian product, $v_1 = w_1$ or $v_2 = w_2$. Suppose that $(v_1,v_2)$ does not lie on $H$. By symmetry, we may assume that $v_{11} < v_{21}$. This implies $v_{11} = 0$ and $v_{21} = 1$, which in turn implies $w_{11} \leq w_{21}$ (since $v_1 = w_1$ or $v_2 = w_2$). Thus $(w_1,w_2)$ lies on the same side of $H$ as $(v_1,v_2)$, and $H$ cannot intersect $e$ in its interior. Therefore, the claim holds and $S$ is slack matrix.\medskip

\noindent (ii) Assume that $S = (S_1,x_1)\otimes_2 (S_2,y_1)$ is a slack matrix. We show that $S_1'=S_1+(\mathbf{1}-x_1)$ is a slack matrix, using Theorem~\ref{thm:char_slack-matrices}. The argument for $S_2'$ is symmetric. It suffices to show that $\aff(\col(S)) \cap \R_+^{m_1} \subseteq \conv(\col(S))$ since the reverse inclusion is obvious. 

Let $x^* \in \aff(\col(S_1')) \cap \R_+^{m_1}$. One has $x^*=\sum_{i\in I} \lambda_i v_i$ for some coefficients $\lambda_i \in \R$ with $\sum_{i\in I} \lambda_i=1$, where $v_i \in \col(S_1')$ for $i\in I$. We partition the index set $I$ into $I_0$ and $I_1$, so that $i\in I_0$ (resp.~$i\in I_1$) if $v_i$ has its $x_1$ entry equal to 0 (resp.~1). 
For simplicity, we may assume that $x_1$ is the first row of $S_1'$, and $\mathbf{1}-x_1$ the second. Then, the first coordinate of $x^*$ is $x^*_1=\sum_{i \in I_1} \lambda_i\geq 0$, and the second is $x^*_2=\sum_{i \in I_0} \lambda_i\geq 0$. Notice that $x_1^* + x_2^* = \sum_{i \in I} \lambda_i = 1$. 

Now, we extend $x^*$ to a point $\tilde{x}\in \aff(\col(S))$ by mapping each $v_i$, $i\in I$ to a column of $S$, as follows. For each $a\in\{0,1\}$, fix an arbitrary column $c_a$ of $S_2^a$, then map each $v_i$ with $i\in I_a$ to the column of $S$ consisting of $v_i$, without its second component, followed by $c_a$. We denote such column by $u_i$, for $i\in I$, and let $\tilde{x} := \sum_{i\in I} \lambda_i u_i$. 

We claim that $\tilde{x} \in\R^m_+$. This is trivial for any component corresponding to a row of $S_1$, since those are components of $x^*$ as well. Consider a component $\tilde{x}_j$ corresponding to a row of $S_2$, and denote by $c_{a,j}$ the corresponding component of $c_a$, for $a=0,1$. We have:
$$
\tilde{x}_j=\sum_{i\in I_0}\lambda_i c_{0,j}+ \sum_{i\in I_1}\lambda_i c_{1,j}=x^*_2 c_{0,j}+x^*_1 c_{1,j} \geq 0.
$$
Now, Theorem \ref{thm:char_slack-matrices} applied to $S$ implies that $\tilde{x} \in\conv(\col(S))$. That is, we can write $\tilde{x}=\sum_{i\in I'} \mu_i u'_i$ where $u'_i \in \col(S)$ and $\mu_i \in \R_+$ for $i \in I'$ and $\sum_{i\in I'}\mu_i=1$. For each $i \in I'$, let $v'_i \in \col(S'_1)$ denote the column vector obtained from $u'_i$ by restricting to the rows of $S_1$ and inserting as a second component $1 - u'_{i,1} = 1 - v'_{i,1}$. We claim that $x^*=\sum_{i\in I'} \mu_i v'_i$, which implies that $x^*\in  \conv(\col(S_1'))$ and concludes the proof. The claim is trivially true for all components of $x^*$ except for the second, for which one has $x^*_2=1-x^*_1$ since $v'_{i,2} = 1-v'_{i,1}$ for all $i \in I'$ by definition of $v'_i$.
\end{proof}

\section{Algorithms}\label{sec:algorithms}

In this section we study the problem of recognizing $1$-\op{}s. Given a matrix $S$, we want to determine whether $S$ is a $1$-\op{}, and find matrices $S_1, S_2$ such that $S=S_1\otimes S_2$. Since we allow the rows and columns of $S$ to be permuted in an arbitrary way, the problem is non-trivial. 

At the end of the section, we extend our methods to the problem of recognizing $2$-\op{}s. We remark that the results in this section naturally extend to a more general operation, the $k$-\op{}, for every constant $k$ (see~\cite{aprile2018thesis} for more details).

We begin with a preliminary observation, which is the starting point of our approach. Suppose that a matrix $S$ is a 1-\op{} $S_1\otimes S_2$. Then the rows of $S$ can be partitioned into two sets $R_1, R_2$, corresponding to the rows of $S_1, S_2$ respectively. We write that $S$ is a 1-\op{} \emph{with respect to} the partition $R_1,R_2$. A column of the form $(a_1,a_2)$, where $a_i$ is a column vector with components indexed by $R_i$ ($i \in [2]$), is a column of $S$ if and only if $a_i$ is a column of $S_i$ for each $i \in [2]$. Moreover, the number of occurrences of $(a_1,a_2)$ in $S$ is just the product of the number of occurrences of $a_i$ in $S_i$ for $i \in [2]$. Under uniform probability distributions on the columns of $S$, $S_1$ and $S_2$, the probability of picking $(a_1,a_2)$ in $S$ is the product of the probability of picking $a_1$ in $S_1$ and that of picking $a_2$ in $S_2$. We will exploit this intuition below.

\subsection{Recognizing 1-\op{}s via submodular minimization} 

First, we recall some notions from information theory, see~\cite{cover2012elements} for a more complete exposition. 
%
Let $A$ and $B$ be two discrete random variables with ranges $\mathcal{A}$ and $\mathcal{B}$ respectively. The \emph{mutual information} of $A$ and $B$ is:
$$
I(A;B) = \sum_{a\in \mathcal{A}, b\in \mathcal{B}} \pr(A=a,B=b) \cdot \log_2 \frac{\pr(A=a,B=b)}{\pr(A=a)\cdot \pr(B=b)}. 
$$
The mutual information of two random variables measures how close is their joint distribution to the product of the two corresponding marginal distributions. 
%
%

We will use the following facts, whose proof can be found in \cite{krause2005near,cover2012elements}. Let $C_1, \ldots, C_m$ be discrete random variables. For $X \subseteq [m]$ we consider the random vectors $C_{X} := (C_i)_{i \in X}$ and $C_{\overline{X}} := (C_i)_{i \in \overline{X}}$, where $\overline{X} := [m] \setminus X$. The function $f : 2^{[m]} \to \R$ such that
\begin{equation}
\label{eq:def_f}
f(X) := I(C_X;C_{\overline{X}})
\end{equation}
will play a crucial role.

\begin{prop}\label{prop:mutual_info}
\begin{enumerate}[(i)]
\item\label{prop:indep} For all discrete random variables $A$ and $B$, we have $I(A;B)\geq 0$, with equality if and only if $A$ and $B$ are independent.
\item\label{prop:submodular}
If $C_1, \ldots, C_m$ are discrete random variables, then the function $f$ as in~\eqref{eq:def_f} is submodular.
\end{enumerate}
\end{prop}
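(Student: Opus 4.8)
The plan is to verify the two statements directly from the definition of mutual information and standard manipulations of entropy. For part~\eqref{prop:indep}, I would rewrite $I(A;B)$ as a Kullback--Leibler divergence, $I(A;B) = D\!\left(p_{AB}\,\|\,p_A\otimes p_B\right)$, between the joint distribution of $(A,B)$ and the product of the marginals. Then nonnegativity of $I(A;B)$ follows from nonnegativity of relative entropy, which in turn is an immediate consequence of Jensen's inequality applied to the strictly convex function $t\mapsto -\log_2 t$ (equivalently, the log-sum inequality). Strict convexity is what gives the equality characterization: $I(A;B)=0$ forces $p_{AB}(a,b)=p_A(a)\,p_B(b)$ for every pair $(a,b)$ in the support, i.e.\ $A$ and $B$ are independent; conversely if they are independent every summand vanishes. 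This is entirely standard and I would just cite \cite{cover2012elements}; the only thing to be careful about is the usual convention $0\log_2 0 = 0$ so that pairs outside the support cause no trouble.

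For part~\eqref{prop:submodular}, I would first record the identity $I(C_X;C_{\overline X}) = H(C_X) + H(C_{\overline X}) - H(C_{[m]})$, where $H(\cdot)$ denotes Shannon entropy, and note that $H(C_{[m]})$ is a constant independent of $X$ while $X \mapsto H(C_{\overline X})$ is just $H(C_X)$ with $X$ replaced by its complement. So submodularity of $f(X) = I(C_X;C_{\overline X})$ reduces to the statement that $g(X) := H(C_X)$ is submodular, i.e.\ $H(C_X) + H(C_Y) \ge H(C_{X\cup Y}) + H(C_{X\cap Y})$ for all $X,Y \subseteq [m]$ (one checks that $X\mapsto H(C_{\overline X})$ being ``supermodular up to a constant'' combines with $-H(C_{[m]})$ correctly — in fact the sum of a submodular function and a supermodular function need not be submodular, so here I would instead argue directly as below). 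Submodularity of the entropy function is classical (it is equivalent to the nonnegativity of conditional mutual information, $I(C_{X\setminus Y}; C_{Y\setminus X}\mid C_{X\cap Y}) \ge 0$), and I would invoke it with the reference \cite{krause2005near,cover2012elements} already cited.

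The one genuine subtlety — and the step I expect to need the most care — is that $f(X) = I(C_X; C_{\overline X})$ is not literally the sum of $g(X) = H(C_X)$ and its complement-reflection plus a constant in a way that makes submodularity automatic, since reflecting a submodular function through complementation yields a submodular function again but the cross terms must be handled. The clean way is to use the symmetric form directly: for $X,Y\subseteq[m]$ one shows
\begin{equation*}
f(X) + f(Y) - f(X\cup Y) - f(X\cap Y) \;=\; I\!\left(C_{X\setminus Y}\,;\,C_{Y\setminus X}\,\middle|\,C_{(X\cap Y)\cup\overline{X\cup Y}}\right) \;\ge\; 0,
\end{equation*}
where the conditional mutual information is nonnegative by part~\eqref{prop:indep} applied conditionally. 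I would derive this identity by expanding each $f$ via $H(C_X)+H(C_{\overline X})-H(C_{[m]})$, grouping the six entropy terms, and recognizing the result as a conditional mutual information; this is a short but slightly fiddly entropy computation, and it is the only place where one must be attentive to which index sets appear. Everything else is bookkeeping.
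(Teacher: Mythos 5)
The paper does not actually prove Proposition~\ref{prop:mutual_info}; it simply cites \cite{krause2005near,cover2012elements}, so your plan is being measured against the standard textbook argument rather than an in-paper proof. Part~(i) of your plan (nonnegativity of relative entropy via Jensen, with the strict-convexity equality case) is exactly that standard argument and is fine. For part~(ii), however, the key identity you state is wrong. Writing $A=X\setminus Y$, $B=Y\setminus X$, $C=X\cap Y$, $D=\overline{X\cup Y}$ and expanding each $f$ via $f(Z)=H(C_Z)+H(C_{\overline Z})-H(C_{[m]})$, the constant cancels and the eight remaining entropy terms group into
\begin{equation*}
f(X)+f(Y)-f(X\cup Y)-f(X\cap Y)\;=\;I\bigl(C_{A};C_{B}\mid C_{C}\bigr)\;+\;I\bigl(C_{A};C_{B}\mid C_{D}\bigr),
\end{equation*}
i.e.\ a \emph{sum of two} conditional mutual informations, not the single term $I(C_A;C_B\mid C_{C\cup D})$ that you claim. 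These genuinely differ: take $C_1,C_2$ dependent and $C_3,C_4$ constant, with $X=\{1,3\}$, $Y=\{2,3\}$; then the left-hand side equals $2I(C_1;C_2)$ while $I(C_1;C_2\mid C_3,C_4)=I(C_1;C_2)$. The conclusion survives, since each of the two conditional mutual informations is nonnegative, so your strategy is repairable with this corrected identity — but as written the ``clean'' identity you would be proving is false.

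A second, smaller point: the detour was unnecessary. Your worry that $X\mapsto H(C_{\overline X})$ is ``supermodular up to a constant'' is mistaken; complementation maps unions to intersections and vice versa, so if $g$ is submodular then $X\mapsto g(\overline X)$ is again submodular. Hence $f(X)=H(C_X)+H(C_{\overline X})-H(C_{[m]})$ is a sum of two submodular functions minus a constant, and submodularity of $f$ follows immediately from the classical submodularity of entropy — no entropy bookkeeping or conditional-mutual-information identity is needed. This simpler route is presumably what the cited references intend, and it also makes the symmetry $f(X)=f(\overline X)$ transparent.
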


Let $S$ be an $m \times n$ matrix. Let $C := (C_1,\ldots,C_m)$ be a uniformly chosen random column of $S$. That is, $\pr(C = c) = \mu(c)/n$, where $\mu(c)$ denotes the number of occurrences in $S$ of the column $c \in \col(S)$. 

Let $f: 2^{[m]} \rightarrow \R$ be defined as in~\eqref{eq:def_f}. We remark that the definition of $f$ depends on $S$, which we consider fixed throughout the section. The set function $f$ is non-negative (by Proposition~\ref{prop:mutual_info}.\ref{prop:indep}), symmetric (that is, $f(X)=f(\overline{X})$) and submodular (by Proposition~\ref{prop:mutual_info}.\ref{prop:submodular}). 

The next lemma shows that we can determine whether $S$ is a 1-\op{} by minimizing $f$.

\begin{lem}\label{lem:mutual_info}
Let $S\in \R^{m\times n}$, and $\emptyset\neq X\subsetneq [m]$. Then $S$ is a $1$-\op{} with respect to $X,\overline{X}$ if and only if $C_X$ and $C_{\overline{X}}$ are independent random variables, or equivalently (by Proposition \ref{prop:mutual_info}.\ref{prop:indep}), $f(X) = 0$.
\end{lem}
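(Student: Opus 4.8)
The plan is to prove the equivalence directly from the combinatorial description of the $1$-\op{} given at the start of Section~\ref{sec:algorithms}, translating it into the language of probability distributions on columns. Recall that $S$ is a $1$-\op{} with respect to the partition $X, \overline{X}$ precisely when the set of columns of $S$, viewed as vectors indexed by $[m] = X \sqcup \overline{X}$, is exactly a ``product set'': writing each column $c$ as a pair $(c_X, c_{\overline X})$, the matrix $S$ is a $1$-\op{} iff there are matrices $S_1$ (rows $X$) and $S_2$ (rows $\overline X$) with $S \cong S_1 \otimes S_2$, and by the definition of $\otimes$ this happens iff (a) $\col(S) = \col(S_1) \times \col(S_2)$ as sets of pairs, and (b) the multiplicity function is multiplicative, i.e. $\mu(c) = \mu_1(c_X)\,\mu_2(c_{\overline X})$ for every column, where $\mu_1, \mu_2$ are the multiplicity functions of $S_1, S_2$. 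The key observation is that (a) and (b) together are equivalent to the single statement that $\pr(C = c) = \pr(C_X = c_X)\cdot \pr(C_{\overline X} = c_{\overline X})$ for all $c$, i.e. that $C_X$ and $C_{\overline X}$ are independent; and the final equivalence with $f(X) = 0$ is then immediate from Proposition~\ref{prop:mutual_info}.\ref{prop:indep}, since $f(X) = I(C_X; C_{\overline X})$.

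Concretely, I would argue the two directions of ``$S$ is a $1$-\op{} w.r.t. $X,\overline X$ $\iff$ $C_X, C_{\overline X}$ independent'' as follows. For the forward direction, suppose $S \cong S_1 \otimes S_2$ with $S_1$ on rows $X$, $S_2$ on rows $\overline X$. Since permuting rows within $X$ and within $\overline X$, and permuting columns, changes neither the joint distribution of $(C_X, C_{\overline X})$ nor the property of being a $1$-\op{} w.r.t. $X, \overline X$, we may assume $S = S_1 \otimes S_2$ on the nose. Then for a column $c = (a_1, a_2)$ with $a_1 \in \R^X$, $a_2 \in \R^{\overline X}$, the number of occurrences of $c$ in $S$ is $\mu_1(a_1)\mu_2(a_2)$ (with the convention that this is $0$ if $a_1 \notin \col(S_1)$ or $a_2 \notin \col(S_2)$), so $\pr(C = c) = \mu_1(a_1)\mu_2(a_2)/(n_1 n_2)$. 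Summing over $a_2$ gives $\pr(C_X = a_1) = \mu_1(a_1)/n_1$, and similarly $\pr(C_{\overline X} = a_2) = \mu_2(a_2)/n_2$; hence $\pr(C = (a_1,a_2)) = \pr(C_X = a_1)\pr(C_{\overline X} = a_2)$, so $C_X$ and $C_{\overline X}$ are independent. For the converse, suppose $C_X$ and $C_{\overline X}$ are independent. Let $S_1$ be the matrix whose columns are the vectors $a_1 \in \R^X$ with $\pr(C_X = a_1) > 0$, each repeated $n_1 := n \cdot \pr(C_X = a_1)$ times (an integer, being the true count), and define $S_2$ analogously on $\overline X$ with counts $n_2 := n\cdot\pr(C_{\overline X} = a_2)$. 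By construction $S_1, S_2$ are nonempty, and I claim $S \cong S_1 \otimes S_2$: a vector $(a_1, a_2)$ occurs in $S_1 \otimes S_2$ with multiplicity $n_1 n_2 = n^2 \pr(C_X=a_1)\pr(C_{\overline X}=a_2) = n^2 \pr(C = (a_1,a_2)) = n \cdot \mu(a_1,a_2)$... here I must be a bit careful with normalization, so the cleanest route is: the multiplicity of $(a_1,a_2)$ in $S_1\otimes S_2$ is $n_1 n_2$, the multiplicity in $S$ is $n\,\pr(C = (a_1,a_2)) = n\,\pr(C_X = a_1)\pr(C_{\overline X} = a_2) = n \cdot \frac{n_1}{n}\cdot\frac{n_2}{n}$, and since $n_1 + $ ... this forces $n = \sum n_1$ over columns of $S_1$... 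I will instead just take $S_1, S_2$ with column multiplicities $n_1(a_1) := n\,\pr(C_X = a_1)$ and $n_2(a_2) := n\,\pr(C_{\overline X} = a_2)$, note $\sum_{a_1} n_1(a_1) = n = \sum_{a_2} n_2(a_2)$, and observe that after rescaling we can equally well use the coprime versions; the essential point is only that both matrices have the same total count $n$ and multiplicities proportional to the marginals, so that $n_1(a_1)\,n_2(a_2)/n$ equals $n\,\pr(C=(a_1,a_2)) = \mu(a_1,a_2)$, which matches the definition of $\otimes$ up to an overall scaling of one factor's multiplicities — and scaling all multiplicities of $S_2$ by a common positive rational and compensating does not change $\col(S_1\otimes S_2)$ as a multiset-up-to-isomorphism in the way the statement requires. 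Getting this bookkeeping exactly right is the one place that needs care.

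Accordingly, the main obstacle I anticipate is not conceptual but is precisely this multiplicity/normalization bookkeeping in the converse direction: the $1$-\op{} $S_1 \otimes S_2$ has $n_1 n_2$ columns, whereas $S$ has $n$ columns, so $n$ must factor as $n_1 n_2$, and one has to exhibit $S_1, S_2$ with the right column counts whose product of multiplicities reproduces the multiplicity function of $S$. Independence of $C_X, C_{\overline X}$ is exactly what guarantees such a factorization of the multiplicity function exists: writing $\mu(a_1,a_2) = n\,\pr(C = (a_1,a_2)) = n\,\pr(C_X=a_1)\,\pr(C_{\overline X} = a_2)$, one sets (after clearing denominators to integers) the multiplicity of $a_1$ in $S_1$ to be proportional to $\pr(C_X = a_1)$ and of $a_2$ in $S_2$ proportional to $\pr(C_{\overline X} = a_2)$, choosing the two proportionality constants so their product is $n$ and each is a positive integer — e.g. take the $S_1$-multiplicities to be the integers $n\,\pr(C_X=a_1)$ and the $S_2$-multiplicities to be $\pr(C_{\overline X}=a_2)$ cleared to lowest integer terms, then rescale. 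Once the factorization is in hand, $\col(S) = \col(S_1)\times\col(S_2)$ holds because $\pr(C=(a_1,a_2))>0 \iff \pr(C_X=a_1)>0$ and $\pr(C_{\overline X}=a_2)>0$, and the multiplicities match by construction, so $S \cong S_1 \otimes S_2$. Finally, the phrase ``or equivalently, $f(X) = 0$'' follows with no further work from Proposition~\ref{prop:mutual_info}.\ref{prop:indep} applied to $A = C_X$, $B = C_{\overline X}$, together with the defining identity $f(X) = I(C_X; C_{\overline X})$.
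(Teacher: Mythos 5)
Your forward direction and the final equivalence with $f(X)=0$ are correct and essentially identical to the paper's argument, and your plan for the converse (factor the multiplicity function $\mu$ through the marginals) is also the paper's route. But the converse as written has a genuine gap, and it is exactly at the spot you flag yourself: you never establish that the required \emph{integer} column multiplicities for $S_1$ and $S_2$ exist. Independence gives you $\mu(a_i,b_j)=\mu_X(a_i)\,\mu_{\overline X}(b_j)/n$, i.e.\ a rank-one \emph{rational} nonnegative factorization of the multiplicity matrix $M_{ij}=\mu(a_i,b_j)$; what you need is a factorization $M=\overline{u}\,\overline{v}^\intercal$ with $\overline u,\overline v$ nonnegative \emph{integer} vectors, since column multiplicities of $S_1,S_2$ must be integers and the definition of $\otimes$ forces $\overline u_i\overline v_j=\mu(a_i,b_j)$ exactly (in particular $n$ must factor as $n_1n_2$). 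Your text trails off precisely here ("this forces $n=\sum n_1$ \dots") and then substitutes an appeal to "clearing denominators to lowest integer terms, then rescaling," asserting that the two proportionality constants can be chosen integral with product $n$ — but that assertion is the whole point, and it is not proved. Taking $S_1$-multiplicities $n\,\pr(C_X=a_1)=\mu_X(a_1)$ and $S_2$-multiplicities $\mu_{\overline X}(a_2)$ gives a matrix with $n^2$ columns, not $n$; dividing one side by $n$ destroys integrality unless you argue why the denominators can be absorbed.

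This gap is closable, and the paper closes it with a short number-theoretic step you are missing: starting from $u_i=\mu_X(a_i)/n$, $v_j=\mu_{\overline X}(b_j)$, if $u_i=p_i/q_i$ in lowest terms then $q_i$ divides every $v_j$ (because $u_iv_j=\mu(a_i,b_j)\in\Z$), so one may replace $(u,v)$ by $(q_iu,\,v/q_i)$ and iterate until $u$ is integral, while $v$ stays a nonnegative integer vector and $uv^\intercal=M$ is preserved. (Equivalently, your "clear denominators and rescale" recipe can be justified by a gcd/prime-valuation computation showing $n$ divides $\gcd_i\mu_X(a_i)\cdot\gcd\bigl(n,\gcd_j\mu_{\overline X}(b_j)\bigr)$ — but some such argument must actually appear; it is not automatic from "multiplicities proportional to the marginals.") Also, the remark that rescaling $S_2$'s multiplicities by a positive rational "does not change $\col(S_1\otimes S_2)$ \dots in the way the statement requires" is only meaningful once both rescaled multiplicity vectors are integral, which again is the unproven point. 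With that step supplied, your construction of $S_1,S_2$ and the observation that $\col(S)=\col(S_1)\times\col(S_2)$ (from positivity of the product of marginals) complete the proof exactly as in the paper.
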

\begin{proof}
First, we prove ``$\Longrightarrow$''. Suppose that $S$ is a $1$-\op{} with respect to $X,\overline{X}$ for some non-empty and proper set $X$ of 
row indices of $S$. Let $S=S_1\otimes S_2$ be the corresponding $1$-\op{}, where $S_i\in \R^{m_i\times n_i}$ for $i=1,2$. 

For any column $c=(c_X,c_{\overline{X}}) \in \col(S)$, we have $\mu(c)=\mu_1(c_X)\mu_2(c_{\overline{X}})$, where $\mu_i$ denotes the multiplicity of a column in $S_i$, $i=1,2$. Hence
\begin{align*}
\pr(C_X=c_X,\ C_{\overline{X}}=c_{\overline{X}})
&= \pr(C=c) = \frac{\mu(c)}{n}\\
= \frac{\mu_1(c_X) \cdot n_2}{n} \cdot\frac{n_1 \cdot \mu_2(c_{\overline{X}})}{n}
&=\pr(C_X=c_X)\cdot\pr(C_{\overline{X}}=c_{\overline{X}}),
\end{align*}
where we used $n=n_1n_2$. This proves that $C_X$ and $C_{\overline{X}}$ are independent.
 
We now prove ``$\Longleftarrow$''. Let $a_1, \dots,a_k$ denote the different columns of the restriction $S\restr{X}$ of matrix $S$ to the rows in $X$, and $b_1,\dots,b_\ell$ denote the different columns of $S\restr{\overline{X}}$ (the restriction of matrix $S$ to the rows in $\overline{X}$). Since $C_X$ and $C_{\overline{X}}$ are independent, we have that, for any column $c=(a_i, b_j)$ of $S$, 
$$
\mu(a_i,b_j)=n\cdot \pr(C_X=a_i, C_{\overline{X}}=b_j)=n\cdot\pr(C_X=a_i) \pr(C_{\overline{X}}=b_j)=\frac{\mu_X(a_i)\mu_{\overline{X}}(b_j)}{n},
$$
where $\mu_X(\cdot)$ and $\mu_{\overline{X}}(\cdot)$ denote multiplicities in $S\restr{X}$ and $S\restr{\overline{X}}$ respectively. 

Now, let $M$ denote the $k \times \ell$ matrix such that $M_{i,j}:=\mu(a_i, b_j)$. We have shown that $M$ is a non-negative integer matrix with a rank-$1$ non-negative factorization of the form $uv^\intercal$, where $u_i := \mu_X(a_i)/n$ and $v_j := \mu_{\overline{X}}(b_j)$, for $i \in [k]$ and $j \in [\ell]$. 

Next, one can easily turn this non-negative factorization into an integer one. Suppose that $u_i$ is fractional for some $i \in [k]$. Writing $u_i$ as $u_i = p_i/q_i$, where $p_i \in \Z_{\geq 0}$ and $q_i \in \Z_{>0}$ are coprime, we see that $q_i$ divides $v_j$ since $u_iv_j$ is integer, for every $j \in [\ell]$. Then the factorization $q_i u \cdot \frac{1}{q_i} v^\intercal = u'(v')^\intercal$ is such that $v'$ is integer and $u'$ has at least one more integer component than $u$. Iterating this argument, we obtain that $M=\overline{u}\,\overline{v}^\intercal$ where $\overline{u},\overline{v}$ have non-negative integer entries. 

Finally, let $S_1$ be the matrix consisting of the column $a_i$ repeated $\overline{u}_i$ times for $i \in [k]$, and construct $S_2$ from $\overline{v}$ in an analogous way. Then it is immediate to see that $S=S_1\otimes S_2$ and in particular $S$ is a 1-\op{} with respect to the row partition $X,\overline{X}$, which concludes the proof.
\end{proof}

Notice that the previous proof also gives a way to efficiently reconstruct $S_1$, $S_2$ once we identified $X$ such that $f(X)=0$.  In particular, if the columns of $S$ are all distinct, then reconstructing $S_1, S_2$ is immediate: $S_1$ consists of all the distinct columns of $S\restr{X}$, each taken once, and $S_2$ is obtained analogously from $S\restr{\overline{X}}$. The last ingredient we need is that every (symmetric) submodular function can be minimized in polynomial time. Here we assume that we are given a polynomial time oracle to compute our function.

\begin{thm}[Queyranne~\cite{queyranne1998minimizing}] \label{thm:submodular}
There is a polynomial time algorithm that outputs a set $X$ such that $X \neq \emptyset, A $ and $f(X)$ is minimum, where $f : 2^{[m]}\rightarrow \R$ is any given symmetric submodular function.
\end{thm}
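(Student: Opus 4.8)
This is Queyranne's theorem, so my plan is to reprove it by lifting the Nagamochi--Ibaraki / Stoer--Wagner minimum-cut algorithm to arbitrary symmetric submodular functions via the notion of a \emph{pendant pair} (here the ground set is $[m]$, and ``$X\neq\emptyset,A$'' means $X$ ranges over proper nonempty subsets of $[m]$). First I would normalize: replacing $f$ by $f(\cdot)-f(\emptyset)$ affects neither submodularity, nor symmetry (since $f(\emptyset)=f([m])$), nor the set of minimizers, so we may assume $f(\emptyset)=f([m])=0$. Call an ordered pair $(s,t)$ of distinct elements a \emph{pendant pair} if $f(\{t\})=\min\{f(X)\mid t\in X,\ s\notin X\}$, i.e.\ the singleton $\{t\}$ is already an optimal set separating $t$ from $s$.

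The first main step will be a reduction, assuming pendant pairs can be found. Given a pendant pair $(s,t)$, I would record the value $f(\{t\})$ and the set $\{t\}$, then \emph{contract} $s$ and $t$ into one new element: let $g$ be $f$ restricted to the sublattice $\mathcal{L}$ of subsets of $[m]$ that do not split $\{s,t\}$, identified with the subsets of the contracted ($(m-1)$-element) ground set. Since $\mathcal{L}$ is closed under union, intersection and complementation, $g$ is again symmetric and submodular, so one recurses; after $m-1$ contractions the algorithm outputs the smallest recorded value together with a realizing set, recovered by tracking preimages of $\{t\}$ through the successive contractions. Correctness is by induction on $m$: if $X^{*}$ is an optimal proper nonempty set for $f$ and it separates $s$ and $t$, then by symmetry of $f$ we may assume $t\in X^{*}$, $s\notin X^{*}$, and the pendant-pair property together with optimality of $X^{*}$ forces $f(X^{*})=f(\{t\})$, which was recorded; otherwise $X^{*}\in\mathcal{L}$ gives a proper nonempty set for $g$ of the same value, found by the recursion. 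The base case $m=2$ is immediate from symmetry.

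The crux will be computing a pendant pair efficiently. I would build a \emph{legal ordering} $v_1,\dots,v_m$ greedily: pick $v_1$ arbitrarily and, with $W_{i-1}:=\{v_1,\dots,v_{i-1}\}$, pick $v_i\notin W_{i-1}$ maximizing $f(\{v_i\})+f(W_{i-1})-f(W_{i-1}\cup\{v_i\})$ among the remaining elements (equivalently, minimizing $f(W_{i-1}\cup\{v\})-f(\{v\})$). The claim to establish is that $(v_{m-1},v_m)$ is then a pendant pair, i.e.\ $f(\{v_m\})\le f(X)$ whenever $v_m\in X$ and $v_{m-1}\notin X$. I would prove this by induction along the ordering: for such an $X$, calling an index $i$ \emph{active} if $X$ splits $\{v_{i-1},v_i\}$, one shows that at each active index a suitable partial form of the greedy quantity at step $i$ is bounded above by $f(X\cap W_i)$, using submodularity of $f$ to combine the bound from the previous active index with the greedy maximality in the choice of $v_i$; telescoping up to the last active index $i=m$ then yields $f(\{v_m\})\le f(X)$. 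When $f$ is the cut function of a weighted graph this is exactly the classical max-adjacency-ordering argument. I expect this inductive lemma to be the main obstacle: the precise shape of the partial quantity and the bookkeeping of active indices must be set up so that one application of submodularity suffices at each step.

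For the running time: there are $m-1$ contraction rounds, each building a legal ordering in $m-1$ greedy steps, and each greedy step evaluates $f$ on $O(m)$ sets. This amounts to $O(m^3)$ calls to the value oracle of $f$ plus polynomially many arithmetic operations, hence polynomial time; in our application the oracle $X\mapsto I(C_X;C_{\overline{X}})$ is polynomial-time computable, so the whole algorithm is polynomial.
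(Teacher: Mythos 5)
The paper does not prove this statement at all: it is imported verbatim from Queyranne \cite{queyranne1998minimizing} as a black box (the ``$A$'' in ``$X\neq\emptyset, A$'' is a typo for $[m]$, as you correctly read it). So there is no internal proof to compare with; what you propose is a reconstruction of Queyranne's own algorithm, and its outer shell is correct and matches the cited source: the normalization $f(\emptyset)=f([m])=0$; the fact that contracting a pendant pair amounts to restricting $f$ to a complement-closed sublattice and hence preserves symmetry and submodularity; the induction showing that the smallest of the $m-1$ recorded values $f(\{t\})$ is the minimum over proper nonempty sets (pendant property gives $f(\{t\})\le f(X^*)$ when $X^*$ separates $s$ from $t$, optimality of $X^*$ gives the reverse); and the $O(m^3)$ oracle-call count.

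The genuine gap sits exactly where the theorem's content lies: the lemma that a legal (maximum-adjacency) ordering ends in a pendant pair, i.e.\ $f(\{v_m\})\le f(X)$ whenever $v_m\in X\not\ni v_{m-1}$. You only gesture at the induction over active indices and concede that the invariant is unresolved; moreover, under the most natural reading your guessed invariant cannot be right. Writing $g_i:=f(\{v_i\})+f(W_{i-1})-f(W_i)$ for the greedy quantity, at the last active index $i=m$ one has $g_m=2f(\{v_m\})$ (since $f(W_{m-1})=f(\{v_m\})$ by symmetry and $f([m])=0$), so the bound $g_m\le f(X\cap W_m)=f(X)$ is stronger than the lemma and already fails for $X=\{v_m\}$ unless $f(\{v_m\})=0$. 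A workable invariant is the ``induced split'' one: for every active index $i$, $f(W_{i-1})+f(\{v_i\})-f(W_i)\le f(X\cap W_i)+f(W_i\setminus X)-f(W_i)$; the step between consecutive active indices combines the greedy choice with submodularity, and the conclusion at $i=m$ uses symmetry twice, via $f(W_{m-1})=f(\{v_m\})$ and $f(\overline{X})=f(X)$, to yield $2f(\{v_m\})\le 2f(X)$. Note in particular that your sketch credits only submodularity in the inductive step, whereas symmetry of $f$ is indispensable inside this lemma, not just in the outer reduction: the pendant-pair phenomenon is specific to symmetric functions. Until this induction is actually carried out, the proposal is an accurate summary of Queyranne's algorithm rather than a proof of the theorem; as a justification for the paper's use of the result, the citation alone already suffices.
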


As a direct consequence, we obtain Theorem \ref{thm:main1}. 

\begin{proof}[Proof of Theorem \ref{thm:main1}] It is clear that $f(X)$ can be computed in polynomial time for any $X$. It suffices then to run Queyranne's algorithm to find $X$ minimizing $f$. If $f(X)>0$, then $S$ is not a 1-\op. Otherwise, $f(X)=0$ and $S_1$, $S_2$ can be reconstructed as described in the proof of Lemma~\ref{lem:mutual_info}.
\end{proof}

We conclude the section with a decomposition result which will be useful in the next section. We call a matrix \emph{irreducible} if it is not a 1-\op. The result below generalizes the fact that a polytope can be uniquely decomposed as a cartesian product of ``irreducible'' polytopes. 


\begin{lem}\label{lem:unique_decomposition}
Let $S\in \R^{m\times n}$ be a 1-\op. Then there exists a partition $\{X_1,\dots, X_t\}$ of $[m]$ such that:
\begin{enumerate}
\item $S$ is a $1$-\op{} with respect to $X_i, \overline{X_i}$ for all $i \in [t]$;
\item for all $i \in [t]$ and all proper subsets $X$ of $X_i$, $S$ is not a 1-\op{} with respect to $X,\overline{X}$;
\item the partition $X_1, \dots, X_t$ is unique up to permuting the labels.
\end{enumerate}
In particular, if $S$ has all distinct columns, then there are matrices $S_1,\dots, S_t$ such that $S=S_1\otimes\dots\otimes S_t$, each $S_i$ is irreducible, and the choice of the $S_i$'s is unique up to renaming and permuting columns.
\end{lem}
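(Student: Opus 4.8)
The plan is to extract the partition directly from the structure of zero-sets of the submodular function $f$ defined in~\eqref{eq:def_f}. By Lemma~\ref{lem:mutual_info}, a nonempty proper $X\subseteq[m]$ witnesses that $S$ is a $1$-\op{} with respect to $X,\overline X$ if and only if $f(X)=0$. Since $f$ is nonnegative (Proposition~\ref{prop:mutual_info}.\ref{prop:indep}) and submodular (Proposition~\ref{prop:mutual_info}.\ref{prop:submodular}), its family of zero-sets $\mathcal F:=\{X\subseteq[m]\mid f(X)=0\}$ is closed under union and intersection: for $X,Y\in\mathcal F$, submodularity gives $0\le f(X\cap Y)+f(X\cup Y)\le f(X)+f(Y)=0$, and nonnegativity forces both terms to vanish. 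Moreover $\emptyset,[m]\in\mathcal F$, and $\mathcal F$ is closed under complementation since $f$ is symmetric. Thus $\mathcal F$ is a sublattice of $2^{[m]}$ that is also complement-closed; equivalently, the nonempty members of $\mathcal F$, together with complementation, form the lattice of unions of blocks of a unique partition $\{X_1,\dots,X_t\}$ of $[m]$ — namely, let the $X_i$ be the \emph{atoms} of $\mathcal F$, i.e. the inclusion-minimal nonempty sets in $\mathcal F$.

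The key steps, in order, are as follows. First, establish the union/intersection closure of $\mathcal F$ as above; this is the heart of the argument. Second, define $X_1,\dots,X_t$ to be the minimal nonempty elements of $\mathcal F$ and check they partition $[m]$: they are pairwise disjoint because the intersection of two distinct atoms lies in $\mathcal F$ and, being properly contained in each, must be empty by minimality; they cover $[m]$ because for any $i\in[m]$ the set $[m]\setminus\{i\}$, being equal to $\bigcup\{X\in\mathcal F: i\notin X\}$'s complement's... more simply, since $S$ is assumed a $1$-\op{} there is at least one nontrivial $X\in\mathcal F$, and iterating intersections of all members of $\mathcal F$ containing a fixed index $i$ yields the atom through $i$ — which is nonempty and hence one of the $X_i$. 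Third, read off the three claimed properties: (1) each $X_i\in\mathcal F\setminus\{\emptyset,[m]\}$ (it is nonempty by construction, and proper because $t\ge2$, as $S$ being a $1$-\op{} gives some nontrivial zero-set whose atoms are strictly smaller than $[m]$), so $f(X_i)=0$ and Lemma~\ref{lem:mutual_info} applies; (2) if $X\subsetneq X_i$ is nonempty then $X\notin\mathcal F$ by minimality of $X_i$, so $f(X)>0$ and $S$ is not a $1$-\op{} with respect to $X,\overline X$; (3) uniqueness, because any partition satisfying (1) and (2) must consist exactly of the atoms of $\mathcal F$ — (1) says each block is in $\mathcal F$, (2) says each block is minimal in $\mathcal F$, and the atoms are uniquely determined by $\mathcal F$, which in turn is uniquely determined by $S$.

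For the final sentence of the lemma, assume $S$ has all distinct columns. Apply Lemma~\ref{lem:mutual_info} and the remark following it: for each $i$, writing $\overline{X_i}=\bigcup_{j\ne i}X_j$, the fact that $f(X_i)=0$ lets us factor $S=S_i\otimes T_i$ where $S_i=S\restr{X_i}$ with each distinct column taken once (distinctness of the columns of $S$ forces this reconstruction, as noted after Lemma~\ref{lem:mutual_info}), and $T_i=S\restr{\overline{X_i}}$ again with distinct columns. One then shows by induction on $t$ that $S=S_1\otimes\cdots\otimes S_t$ where $S_i=S\restr{X_i}$: peeling off $X_1$ gives $S=S_1\otimes T_1$ with $T_1=S\restr{\overline{X_1}}$, and the partition $\{X_2,\dots,X_t\}$ of $\overline{X_1}$ has, for $T_1$, exactly the properties (1)–(2) inherited from those for $S$ (since a zero-set of $f$ avoiding $X_1$ restricts to a zero-set of the analogous function for $T_1$, and conversely), so induction finishes it; each $S_i=S\restr{X_i}$ is irreducible precisely by property~(2). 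Uniqueness up to renaming and permuting columns follows from uniqueness of the partition together with the fact that, columns being distinct, each irreducible factor is determined as the column-restriction to its block.

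The main obstacle I anticipate is not the lattice argument — that is essentially the standard ``uncrossing'' fact for symmetric submodular functions — but rather pinning down the inductive bookkeeping in the last paragraph: one must verify cleanly that the zero-sets of $f$ (for $S$) that are contained in $\overline{X_1}$ correspond bijectively, via restriction, to the zero-sets of the mutual-information function built from $T_1=S\restr{\overline{X_1}}$, so that properties (1)–(3) transfer to the smaller instance. This uses that once $S=S_1\otimes T_1$, the uniform random column of $S$ restricted to $\overline{X_1}$ is exactly the uniform random column of $T_1$, so the relevant information-theoretic quantities agree; making this precise is routine but is the step where care is needed.
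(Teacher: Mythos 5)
Your proposal is correct and follows essentially the same route as the paper: both extract the blocks as the inclusion-minimal nonempty zero-sets of the symmetric submodular function $f$, use nonnegativity plus submodularity (uncrossing) for closure under union/intersection, invoke Lemma~\ref{lem:mutual_info} in both directions for properties (1)--(2) and for $t\ge 2$, and get uniqueness because the atoms are determined by $f$, with the distinct-columns case handled via the reconstruction remark after Lemma~\ref{lem:mutual_info}. Your inductive peeling for $S=S_1\otimes\cdots\otimes S_t$ is just a more explicit write-up of what the paper states tersely, not a different argument.
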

\begin{proof}
Let $f:2^{[m]} \to \R$ be the function defined in Equation~\ref{eq:def_f}. Let $\mathcal M =\{X \subseteq [m] \mid f(X)=0\}$.  Let $X_1, \dots, X_t$ be the minimal (under inclusion) non-empty members of $\mathcal M$.  Since $f$ is non-negative and submodular, if $f(A)=f(B)=0$, then $f(A\cap B)=f(A\cup B)=0$. By minimality, this implies that $X_i \cap X_j = \emptyset$ for all $i \neq j$.  Since $f$ is symmetric, $\bigcup_{i \in [t]} X_i=[m]$. By Lemma~\ref{lem:mutual_info},  $t \geq 2$ and $X_1, \dots, X_t$ satisfy (i) and (ii).  Conversely, by Lemma~\ref{lem:mutual_info}, if $\{Y_1, \dots, Y_s\}$ is a partition of $[m]$ satisfying (i) and (ii), then $Y_1, \dots, Y_s$ are the minimal non-empty members of $\mathcal M$, which proves uniqueness.  



To conclude, assume that $S$ has all distinct columns. Then as argued above each $S_i$ is obtained by picking each distinct column of $S\restr{X_i}$ exactly once, and it is thus unique up to permutations, once $X_i$ is fixed. Each $S_i$ is irreducible thanks to the minimality of $X_i$ and to Lemma \ref{lem:mutual_info}. The fact that the $X_i$'s are unique up to renaming concludes the proof.
\end{proof}

\subsection{Extension to $2$-\op{}s}
We now extend the previous results to obtain a polynomial algorithm to recognize $2$-\op{}s. Recall that, if a matrix $S$ is a $2$-\op, then it has a special row that divides $S$ in submatrices $S^{0}, S^{1}$, which are 1-\op{}s \emph{with respect to the same partition}. Hence, our algorithm starts by guessing the special row, and obtaining the corresponding submatrices $S^{0}, S^{1}$. Let $f_0$ (resp. $f_1$) denote the function $f$ as defined in \eqref{eq:def_f} with respect to the matrix $S^{0}$ ($S^{1}$), and let $\tilde{f}=f_0+f_1$. Notice that $\tilde{f}$ is submodular, and is zero if and only if each $f_i$ is. Let $X$ be a proper subset of the non-special rows of $S$ (which are the rows of $S^{0}$ and $S^{1}$). It is an easy consequence of Lemma \ref{lem:mutual_info} that $S^{0}, S^{1}$ are 1-\op{}s with respect to $X$ if and only if $\tilde{f}(X)=0$. Then $S$ is a $2$-\op{} with respect to the chosen special rows if and only if the minimum of $\tilde{f}$ is zero.

Once a feasible partition is found, $S_1, S_2$ can be reconstructed by first reconstructing all $S_1^0, S_1^1, S_2^0, S_2^1$ and then concatenating them and adding the special rows.
We obtained the following:
\begin{thm}
\label{thm:2-sum_recog}
Let $S\in \R^{m\times n}$. There is an algorithm that is polynomial in $m,n$ and determines whether $S$ is a $2$-\op{} and, in case it is, outputs two matrices $S_1, S_2$ and special rows $x_1$ of $S_1$, $y_1$ of $S_2$, such that $S=(S_1,x_1)\otimes_2 (S_2, y_1)$.
\end{thm}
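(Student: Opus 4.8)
The plan is to follow the recipe already sketched in the paragraph preceding the statement, and to make the ``easy consequence of Lemma~\ref{lem:mutual_info}'' precise. First I would enumerate the candidate special rows: a $2$-\op{} has a special $0/1$ row $r$ whose $0$-columns and $1$-columns induce submatrices $S^0,S^1$ (after deleting $r$) that are $1$-\op{}s with respect to a common row partition. Since there are at most $m$ choices for $r$, and each must be $0/1$ with both a $0$ and a $1$ entry (otherwise $S_1^0,S_1^1$ or $S_2^0,S_2^1$ would be empty), we loop over all such rows; for a fixed choice we must decide whether there is a proper nonempty subset $X$ of the remaining $m-1$ row indices such that \emph{both} $S^0$ and $S^1$ are $1$-\op{}s with respect to $X,\overline X$.

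The key step is to reduce this to a single symmetric submodular minimization. For $a\in\{0,1\}$ let $C^a$ be a uniformly random column of $S^a$ and let $f_a(X):=I(C^a_X;C^a_{\overline X})$ be the function from \eqref{eq:def_f} applied to $S^a$; set $\tilde f:=f_0+f_1$. By Proposition~\ref{prop:mutual_info}.\ref{prop:submodular} each $f_a$ is submodular, and it is symmetric by construction, so $\tilde f$ is a nonnegative symmetric submodular function on the ground set of non-special row indices. By Lemma~\ref{lem:mutual_info}, $f_a(X)=0$ iff $S^a$ is a $1$-\op{} with respect to $X,\overline X$; since $f_0,f_1\ge 0$, we get $\tilde f(X)=0$ iff both $S^0$ and $S^1$ are $1$-\op{}s with respect to the \emph{same} partition $X,\overline X$, which is exactly the condition that $S$ is a $2$-\op{} with special row $r$ and row partition $X,\overline X$. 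Running Queyranne's algorithm (Theorem~\ref{thm:submodular}) on $\tilde f$ over the non-special rows finds a minimizer in polynomial time; if its value is $0$ for some candidate special row we have a valid decomposition, and if it is positive for every candidate then $S$ is not a $2$-\op. Since $\tilde f$ is clearly oracle-computable in polynomial time and there are only $O(m)$ candidate rows, the whole procedure is polynomial in $m,n$.

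It remains to reconstruct $S_1,S_2$ from a feasible pair $(r,X)$. Once $X$ is fixed we reconstruct $S_1^0,S_1^1$ from $S^0,S^1$ by restricting to the rows in $X$ and applying the reconstruction inside the proof of Lemma~\ref{lem:mutual_info} (taking multiplicities into account via the rank-$1$ integer factorization there), and likewise $S_2^0,S_2^1$ from the rows in $\overline X$. We then form $S_1$ by juxtaposing the columns of $S_1^0$ and $S_1^1$ and prepending the special row $x_1$ that is $0$ on the columns coming from $S_1^0$ and $1$ on those from $S_1^1$, and symmetrically $S_2$ with special row $y_1$; by definition of $\otimes_2$ this yields $S=(S_1,x_1)\otimes_2(S_2,y_1)$.

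I expect the only subtle point — and hence the main obstacle — to be matching multiplicities on both sides simultaneously when $S$ has repeated columns: a priori $S_1^0$ and $S_1^1$ obtained from $S^0$ and $S^1$ need not share a common ``$S_1$'' whose special-row restriction is consistent. In the distinct-columns case this is immediate, so one either restricts to that case (sufficient for the application to $2$-level matroid base polytopes, where slack matrices can be taken non-redundant) or argues that the column multiplicities of $S^0$ and $S^1$ factor compatibly; the details parallel the integer-factorization argument in Lemma~\ref{lem:mutual_info} and I would only sketch them, since the problem statement allows outputting \emph{some} valid $S_1,S_2$ rather than a canonical one.
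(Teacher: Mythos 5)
Your proposal is correct and follows essentially the same route as the paper: guess the special $0/1$ row, form $S^0,S^1$, minimize the symmetric nonnegative submodular function $\tilde f=f_0+f_1$ over the non-special rows via Queyranne's algorithm, and reconstruct the factors blockwise. The concern in your final paragraph is not an actual obstacle: the definition of $\otimes_2$ imposes no compatibility between $S_1^0$ and $S_1^1$ (by definition $S_1$ is nothing more than the juxtaposition of the columns of $S_1^0$ and $S_1^1$ with the row $x_1=(0\cdots 0\;1\cdots 1)$ appended), so the blockwise reconstruction in your third paragraph, applying the integer rank-$1$ factorization from Lemma~\ref{lem:mutual_info} independently in each block, already yields valid $S_1,S_2$ for arbitrary $S$ with repeated columns, and no restriction to distinct columns or extra compatibility argument is needed.
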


In order to apply Theorem \ref{thm:2-sum_recog} to decompose slack matrices, we need to deal with a last issue. In the algorithm, it is essential to guess the special row that partitions the column set in 1-\op{}s. However, in principle there might be a slack matrix that is obtained as $2$-\op{} of other slack matrices, but where the special row is redundant. Then, deleting such row still gives a slack matrix, but we cannot recognize such matrix as $2$-\op{} any more using our algorithm. However, the next lemma ensures that this does not happen, as long as the special rows are not redundant in the factors of the $2$-\op.

\begin{lem}\label{lem:k-sum_non-redundant}
Let $S\in\R^{m\times n}$ and let $S_i\in\R^{m_i\times n_i}$ for $i=1,2$ such that $S=(S_1,x_1) \otimes_2 (S_2,y_1)$ for some special rows $x_1$ of $S_1$ and $y_1$ of $S_2$. Assume that $S_1, S_2, S$ are slack matrices, and that the rows $x_1$, $y_1$ are non-redundant for $S_1, S_2$ respectively. Then the special row $r$ in $S$ is non-redundant as well.
\end{lem}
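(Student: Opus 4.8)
The goal is to show that the special row $r$ of $S=(S_1,x_1)\otimes_2(S_2,y_1)$ is non-redundant, i.e.\ that (a) it is not identically $0$ or identically $1$, (b) it is not a duplicate of another row of $S$, and (c) it corresponds to an actual facet of the polytope $P$ with slack matrix $S$. Conditions (a) and (b) will follow quickly from the structural assumptions, and the real content is (c). Recall from the proof of Lemma~\ref{lem:2-sum_slack}(i) that $S$ is the slack matrix of $(P_1\times P_2)\cap H$, where $P_i:=\conv(\col(S_i))$ and $H=\{(x,y)\mid x_1=y_1\}$, and that this intersection creates no new vertices; the row $r$ corresponds to the inequality $x_1\le 1$ (equivalently, since we are on $H$, to $y_1\le 1$) — it is the inequality of $P_1\times P_2$ cut down to the slice $H$. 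So (c) amounts to showing that $\{(x,y)\in(P_1\times P_2)\cap H\mid x_1=1\}$ is a facet of $(P_1\times P_2)\cap H$, equivalently that it has dimension exactly one less than $\dim((P_1\times P_2)\cap H)$.

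First I would record the easy points. Since $x_1$ is non-redundant for $S_1$, it is neither all-$0$ nor all-$1$, so both $S_1^0$ and $S_1^1$ are non-empty (this is already assumed in the definition of $\otimes_2$), and likewise for $y_1$; hence $r$ has both $0$ and $1$ entries, giving (a). For (b): a row of $S$ that duplicated $r$ would, restricted to the columns coming from $S_1^0\otimes S_2^0$, be identically $0$ and, restricted to those from $S_1^1\otimes S_2^1$, be identically $1$; but such a row, projected to the $S_1$-coordinates, would be a copy of $x_1$ in $S_1$ (or in $S_2$, a copy of $y_1$), contradicting non-redundancy of $x_1$ in $S_1$ (resp.\ $y_1$ in $S_2$). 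This uses that every row of $S$ is inherited either from $S_1$ (for the $S_1$-block) or from $S_2$.

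For (c), the plan is a dimension count. Since $x_1$ is non-redundant in $S_1$, the face $F_1:=\{x\in P_1\mid x_1=1\}$ is a facet of $P_1$, so $\dim F_1=\dim P_1-1$; symmetrically $F_2:=\{y\in P_2\mid y_1=1\}$ is a facet of $P_2$ with $\dim F_2=\dim P_2-1$. Now $(P_1\times P_2)\cap H\cap\{x_1=1\}=\{(x,y)\mid x\in P_1,\ x_1=1,\ y\in P_2,\ y_1=1\}=F_1\times F_2$, whose dimension is $\dim P_1+\dim P_2-2$. On the other hand $(P_1\times P_2)\cap H$ is obtained from the $(\dim P_1+\dim P_2)$-dimensional polytope $P_1\times P_2$ by imposing the single equation $x_1-y_1=0$; I must check this equation is \emph{not} already implied on $P_1\times P_2$, which holds precisely because $x_1$ takes both values $0$ and $1$ on $P_1$ (so $x_1-y_1$ is non-constant on $P_1\times P_2$), so $\dim((P_1\times P_2)\cap H)=\dim P_1+\dim P_2-1$. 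Comparing, $F_1\times F_2$ has codimension exactly $1$ in $(P_1\times P_2)\cap H$, hence is a facet; its slack-zero set is exactly the columns of $S$ with $r$-entry $1$, so $r$ is facet-defining and therefore non-redundant. Combining with (a) and (b) finishes the proof.

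The step I expect to be the main obstacle is the dimension bookkeeping in (c): one must be careful that intersecting with $H$ genuinely drops the dimension by one (needs $x_1\not\equiv y_1$ on $P_1\times P_2$, which is where non-redundancy — via the existence of both $S_i^0$ and $S_i^1$ — is used), and that $F_1\times F_2$ is nonempty and really equals the claimed face; a cleaner alternative, if the direct count gets delicate, is to argue instead on slack matrices directly — showing the row $r$ of $S$ is not a non-negative combination of the other rows together with the all-ones affine relation, using Theorem~\ref{thm:char_slack-matrices} applied to $S_1$, $S_2$ to transfer the corresponding irredundancy certificates for $x_1$ in $S_1$ and $y_1$ in $S_2$ up to $S$. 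I would attempt the geometric dimension argument first and fall back to the slack-matrix certificate argument only if the faces are awkward to handle.
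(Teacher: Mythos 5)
There is a genuine gap, and it comes from a sign error about which face a row of a slack matrix defines. In a slack matrix, the face associated with a row is its set of \emph{zero} entries (slack $0$ means the vertex lies on the corresponding hyperplane). So non-redundancy of $x_1$ in $S_1$ says that $F_1^0:=\{x\in P_1\mid x_1=0\}$ is a facet of $P_1$; it says nothing about $F_1:=\{x\in P_1\mid x_1=1\}$, which is the face of the \emph{complement} row $\mathbf{1}-x_1$ — a row that need not be facet-defining at all (this is precisely why Lemma~\ref{lem:2-sum_slack}(ii) has to add it by hand). Concretely, if $P_1$ is a triangle and $x_1$ is the (2-level, facet-defining) row with slacks $(1,0,0)$, then $\{x_1=0\}$ is an edge but $\{x_1=1\}$ is a single vertex, so your claim ``$F_1$ is a facet since $x_1$ is non-redundant'' fails under the lemma's hypotheses. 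The same swap occurs at the top level: the special row $r$ has zeros exactly on the columns coming from $S_1^0\otimes S_2^0$, so it corresponds to the inequality $x_1\ge 0$ on $(P_1\times P_2)\cap H$, and the face you must prove to be a facet is $\{(x,y)\in (P_1\times P_2)\cap H\mid x_1=0\}=F_1^0\times F_2^0$, not $F_1\times F_2$; as written, your conclusion (``the slack-zero set is the columns with $r$-entry $1$'') would at best establish non-redundancy of $\mathbf{1}-r$, not of $r$.

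The underlying strategy is salvageable and, once both signs are flipped consistently, correct: $F_1^0$ and $F_2^0$ are facets exactly by the non-redundancy hypotheses, the face of $r$ in $(P_1\times P_2)\cap H$ is $F_1^0\times F_2^0$ of dimension $\dim P_1+\dim P_2-2$, and $\dim\bigl((P_1\times P_2)\cap H\bigr)=\dim P_1+\dim P_2-1$ because both special rows take the values $0$ and $1$, so $P_1\times P_2$ has vertices strictly on both sides of $H$; a codimension-one face is a facet, hence $r$ is non-redundant. This corrected geometric count is a legitimate alternative to the paper's proof, which is purely combinatorial and shorter: if $r$ were redundant, its zero set would be strictly contained in that of another row $r'$ of $S$, and pulling $r'$ back to the factor it comes from yields a row of $S_1$ (or $S_2$) whose zero set strictly contains that of $x_1$ (or $y_1$), contradicting their non-redundancy. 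In revising, either adopt that domination argument or fix the faces as above; also note that your step (b) on duplicate rows is not needed under the paper's working notion of redundancy (strict domination of zero sets), and non-redundancy of $x_1$ alone does not exclude a duplicate of $x_1$ in $S_1$ anyway.
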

\begin{proof}
Notice that, in any slack matrix, a row $r$ is redundant if and only if its set of zeros is strictly contained in the set of zeros of another row $r'$ (we write that $r'$ \emph{dominates} $r$ for brevity).
Assume by contradiction that $r$ is redundant, and let $r'$ be another row of $S$ such that $r'$ dominates $r$. Let us assume by symmetry that $r'$ corresponds to a row $r_1$ of $S_1$, i.e. $r'$ consists (up to permutation) of $r_1$ repeated $n_2$ times. Then it is clear that $r_1$ dominates $x_1$, hence $x_1$ is redundant in $S_1$.
\end{proof} 

\section{Application to 2-level matroid base polytopes}\label{sec:slackmatroids}

In this section, we use the results in Section \ref{sec:algorithms} to derive a polynomial time algorithm to recognize the slack matrix of a 2-level base matroid polytope. 

We start with some basic definitions and facts about matroids, and we refer the reader to \cite{oxley2006matroid} for missing definitions and details. We regard a matroid $M$ as a couple $(E,\calB)$, where $E$ is the ground set of $M$, and $\calB$ is its set of bases. The dual matroid of $M$, denoted by $M^*$, is the matroid on the same ground set whose bases are the complements of the bases of $M$. 
An element $p\in E$ is called a \emph{loop} (respectively \emph{coloop}) of $M$ if it appears in none (all) of the bases of $M$. 
Given an element $e\in E$, the \emph{deletion} of $e$ is the matroid $M-e$ on $E\setminus \{e\}$ whose bases are the bases of $M$ that do not contain $e$. The \emph{contraction} of $e$ is the matroid $M/e$ on $E\setminus \{e\}$ whose bases are of the form $B\setminus \{e\}$, where $B$ is a basis of $M$ that contains $e$. 
A matroid $M = (E,\calB)$ is \emph{uniform} if $\calB=\binom{E}{k}$, where $k$ is the rank of $M$. We denote the uniform matroid with $n$ elements and rank $k$ by $U_{n,k}$.

Consider matroids $M_1=(E_1,\calB_1)$ and $M_2=(E_2,\calB_2)$, with non-empty ground sets. If $E_1 \cap E_2 =\emptyset$, the \emph{1-sum} $M_1 \oplus M_2$ is defined as the matroid with ground set $E_1 \cup E_2$ and base set $\calB_1 \times \calB_2$. If, instead, $E_1 \cap E_2 = \{p\}$, where $p$ is neither a loop nor a coloop in $M_1$ or $M_2$, 
we let the \emph{2-sum} $M_1 \oplus_2 M_2$ be the matroid with ground set $(E_1 \cup E_2) \setminus \{p\}$, and base set $\{(B_1 \cup B_2) \setminus \{p\} \mid B_i\in \calB_i \mbox{ for } i=1,2 \mbox{ and } p\in B_1 \triangle B_2\}$.  A matroid is \emph{connected} if it cannot be written as the 1-sum of two matroids, each with fewer elements. It is well known that $M_1 \oplus_2 M_2$ is connected if and only if so are $M_1$ and $M_2$.

The \emph{base polytope} $B(M)$ of a matroid $M$ is the convex hull of the characteristic vectors of its bases. It is well known that:
\[
B(M)=\{x\in\R^E_+: x(U)\leq \rk(U) \ \forall U\subseteq E, x(E)=\rk(E)\},
\]
where $\rk$ denotes the rank function of $M$.
It is easy to see that, if $M=M_1\oplus M_2$, then $B(M)$ is the cartesian product $B(M_1)\times B(M_2)$, hence its slack matrix is a 1-\op{} thanks to Lemma \ref{lem:1-sum_slack}. If $M=M_1\oplus_2 M_2$, then a slightly less trivial polyhedral relation holds, providing a connection with the 2-\op{} of slack matrices. We will explain this connection below.
We remark that, for any matroid $M$, the base polytopes $B(M)$ and $B(M^*)$ are affinely equivalent via the transformation $f(x)=1-x$ and hence have the same slack matrix. 

Our algorithm is based on the following decomposition result, that characterizes those matroids $M$ such that $B(M)$ is 2-level (equivalently, such that $B(M)$ admits a 0/1 slack matrix). 

\begin{thm}[\cite{Grande16}] \label{thm:matroid-2-level} 
	The base polytope of a matroid $M$ is $2$-level if and only if $M$ can be obtained from uniform matroids through a sequence of 1-sums and 2-sums.
\end{thm}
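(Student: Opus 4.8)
The plan is to reduce $2$-levelness of $B(M)$ to a purely matroidal statement, and then induct along a decomposition of $M$ into $1$-sums and $2$-sums, so that only the $3$-connected case requires real work. For the reduction I would first invoke the facet description of base polytopes: assuming (harmlessly) that $M$ is connected with neither loops nor coloops and $\rk(E)=r$, the facets of $B(M)=\{x\ge 0:\ x(E)=r,\ x(U)\le\rk(U)\ \forall U\subseteq E\}$ are the inequalities $x_e\ge 0$ together with $x(U)\le\rk(U)$ for the \emph{flacets} $U$ of $M$, namely the proper nonempty $U$ for which both $M|_U$ and the contraction $M/U$ are connected. The facets $x_e\ge 0$ and $x_e\le 1$ are automatically $2$-level because bases have $0/1$ coordinates, and for a flacet facet one computes $\min_{B\in\calB}|B\cap U|=r-\rk(E\setminus U)$, so it is $2$-level precisely when $\rk(U)+\rk(E\setminus U)-r\le 1$. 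Writing $\lambda_M(U):=\rk(U)+\rk(E\setminus U)-r$ for the connectivity function, which is $\ge 1$ on proper nonempty $U$ as $M$ is connected, the target reformulation is
\[
B(M)\text{ is }2\text{-level}\iff \lambda_M(U)=1\text{ for every flacet }U\text{ of }M.
\]

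For the ``if'' direction I would induct on a fixed expression of $M$ from uniform matroids via $1$-sums and $2$-sums. In the base case $M=U_{n,k}$, $B(M)$ is a hypersimplex whose only facets are $x_e\ge 0$ and $x_e\le 1$, hence $2$-level. If $M=M_1\oplus M_2$, then $B(M)=B(M_1)\times B(M_2)$, and a Cartesian product of $2$-level polytopes is $2$-level (a facet of the product is $F_1\times P_2$ or $P_1\times F_2$ with $F_i$ a $2$-level facet of $P_i$). If $M=M_1\oplus_2 M_2$ with glued element $p$, then -- invoking the polyhedral relation between $B(M_1\oplus_2 M_2)$ and the $2$-\op{} established later in this section -- the slack matrix of $B(M)$ is isomorphic to $(S_1,x_1)\otimes_2(S_2,y_1)$, where $S_i$ is (essentially) the slack matrix of $B(M_i)$ and $x_1,y_1$ are its rows indexed by $p$; by induction $S_1,S_2$ are $0/1$, and $x_1,y_1$ are $2$-level rows (as $p$ is neither a loop nor a coloop, and every facet of a $2$-level polytope is $2$-level), so Lemma~\ref{lem:2-sum_slack}(i) shows $(S_1,x_1)\otimes_2(S_2,y_1)$ is a slack matrix, and it is $0/1$ because the $2$-\op{} of $0/1$ matrices with $0/1$ special rows is again $0/1$. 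Hence $B(M)$ has a $0/1$ slack matrix, i.e.\ it is $2$-level.

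For the ``only if'' direction, assume $B(M)$ is $2$-level. If $M$ is disconnected, write $M=M_1\oplus M_2$, so $B(M)=B(M_1)\times B(M_2)$ and each $B(M_i)$, being isomorphic to a face of $B(M)$, is $2$-level. If $M$ is connected but not $3$-connected, write $M=M_1\oplus_2 M_2$ with glued element $p$; the slack matrix $S$ of $B(M)$ is $0/1$ and, by the same polyhedral relation, equals $(S_1,x_1)\otimes_2(S_2,y_1)$ with $S_i$ the slack matrix of $B(M_i)$ and $x_1,y_1$ the ($2$-level) rows indexed by $p$, so Lemma~\ref{lem:2-sum_slack}(ii) gives that $S_1+(\mathbf 1-x_1)$ and $S_2+(\mathbf 1-y_1)$ are slack matrices; being $0/1$, they certify that $B(M_1)$ and $B(M_2)$ are $2$-level. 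Iterating along the Cunningham--Edmonds decomposition of $M$ into $3$-connected pieces, circuits and cocircuits (the latter two being uniform matroids), it remains to prove
\[
M\text{ is }3\text{-connected and }B(M)\text{ is }2\text{-level}\ \Longrightarrow\ M\text{ is uniform.}
\]

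I expect this last implication to be the main obstacle, and I would attack it by an excluded-minor argument. First, deleting (resp.\ contracting) an element $e$ corresponds, up to dropping the coordinate $x_e$, to passing to the face $\{x_e=0\}$ (resp.\ $\{x_e=1\}$) of $B(M)$; since faces of $2$-level polytopes are $2$-level, the class of matroids with $2$-level base polytope is minor-closed. Second, a direct finite check with the $\lambda_M$-criterion shows that certain small $3$-connected non-uniform matroids fail to be $2$-level -- most importantly $M(K_4)$ and the rank-$3$ whirl $W^3$, in each of which a triangle $U$ is a flacet with $\lambda_M(U)=2$. Third -- the hard part -- one must show that every $3$-connected non-uniform matroid contains one of these forbidden matroids as a minor; using self-duality of the class (recall $B(M)\cong B(M^*)$) one may assume $M$ has a circuit of size at most $\rk(E)$, and then a splitter/chain-theorem type induction on $3$-connected matroids should produce the forbidden minor. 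Combining the three steps, a $3$-connected non-uniform $M$ would have a minor whose base polytope is not $2$-level, contradicting minor-closedness; hence $M$ is uniform, which closes the induction. Pinning down the exact finite list of excluded minors and executing the structural induction is precisely the combinatorial content of \cite{Grande16}.
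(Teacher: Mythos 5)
Note first that the paper you are asked to match contains no proof of this statement: Theorem~\ref{thm:matroid-2-level} is imported verbatim from \cite{Grande16}, so the only question is whether your sketch could stand on its own. Its overall architecture does track the actual proof in \cite{Grande16}: the reformulation of $2$-levelness via flacets and the connectivity function $\lambda_M$ (with the computation $\min_B|B\cap U|=\rk(E)-\rk(E\setminus U)$), the treatment of $1$-sums as Cartesian products, the treatment of $2$-sums via the polyhedral relation of Lemma~\ref{obs:2sumpolydescription} together with Lemma~\ref{lem:2-sum_slack}, and the reduction to the claim that a $3$-connected matroid with $2$-level base polytope is uniform. (As a side remark, for the ``only if'' reduction you do not need the slack-matrix surgery of Lemma~\ref{lem:2-sum_slack}(ii): both $M_1$ and $M_2$ are minors of $M_1\oplus_2 M_2$, so the minor-closedness you establish via faces already yields $2$-levelness of $B(M_1)$ and $B(M_2)$.)

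The genuine gap is the $3$-connected case, which is the entire content of the theorem, and there your excluded-minor plan fails as stated. The list $\{M(K_4),\,W^3\}$ is not sufficient: the relaxations $Q_6$ and $P_6$ of $M(K_4)$ are $3$-connected, non-uniform, rank-$3$ matroids on six elements, and since they have the same size and rank as $M(K_4)$ and $W^3$ without being isomorphic to either, they contain neither as a minor. So the assertion ``every $3$-connected non-uniform matroid has an $M(K_4)$- or $W^3$-minor'' is false, and step three of your argument collapses. Both $Q_6$ and $P_6$ do possess a three-point line $U$ that is a flacet with $\lambda(U)=2$, so they are themselves minimal obstructions; the correct excluded-minor list in \cite{Grande16} is $\{M(K_4),\,W^3,\,Q_6,\,P_6\}$, and the structural fact that every $3$-connected non-uniform matroid contains one of these four as a minor is a nontrivial theorem from the matroid literature, not something a generic ``splitter/chain-theorem type induction'' can be waved at. Since you explicitly defer exactly this point (``pinning down the exact finite list \dots is precisely the combinatorial content of \cite{Grande16}''), what you have is a correct reduction of the theorem to its hard combinatorial core, plus a placeholder where that core should be; as a proof of the statement it is incomplete, and the one concrete claim you make about the core is incorrect.
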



The general idea is to use the algorithms from Theorems \ref{thm:main1}, \ref{thm:2-sum_recog} to decompose our candidate slack matrix as 1-\op{} and 2-\op{}, until each factor corresponds to the slack matrix of a uniform matroid. The latter can be easily recognized. Indeed, the base polytope of the uniform matroid $U_{n,k}$ is the $(n,k)$-hypersimplex
\(
B(U_{n,k}) = \{ x \in [0,1]^E \mid \sum_e x_e = k \}
\).
If $2\leq k \leq n-2$, the (irredundant, 0/1) slack matrix $S$ of $B(U_{n,k})$ has $2n=2|E|$ rows and $\binom{n}{k}$ columns of the form $(v, \mathbf{1}-v)$ where $v\in\{0,1\}^n$ is a vector with exactly $k$ ones, hence can be recognized in polynomial time (in its size). We denote such matrix by $S_{n,k}$. If $k=1$, or equivalently $k=n-1$, $S=S_{n,1}=S_{n,n-1}$ is just the identity matrix $I_n$. The case $k=0$ or $k=n$ corresponds to a non-connected matroid whose base polytope is just a single vertex, and can be ignored for our purposes.

Before going further, we need some preliminary assumptions. Let $M(E,\calB)$ be a matroid such that $B(M)$ is 2-level, and let $S$ be a 0/1 slack matrix of $B(M)$. From now on we assume that:
\begin{enumerate}
\item \label{assump:loops} $M$ does not have loops or coloops. 
\item \label{assump:const} $S$ does not have any constant row (i.e. all zeros or all ones).
\item \label{assump:nonneg}$S$ has a row for each inequality of the form $x(e)\geq 0$ for $e\in E$ (we refer to such rows as non-negativity rows).

\end{enumerate} 
Assumption \ref{assump:loops} is without loss of generality as, if $e$ is a loop or coloop of $M$, then $B(M)$ has a constant coordinate in correspondence of $e$ and is thus isomorphic to $B(M-e)$. Similarly, Assumption \ref{assump:const} is without loss of generality as constant rows correspond to redundant inequalities and can always be removed from a slack matrix.

We now justify Assumption \ref{assump:nonneg}. 
One can show (directly, or using well known facts from~\cite{tutte1965lectures} and~\cite{feichtner2005matroid}) that for each element $e\in E$ at least one of the inequalities $x_e\geq 0$, $x_e\leq 1$ is facet defining for $B(M)$. Notice that these form pairs of opposite 2-level inequalities, and recall from the discussion in Section \ref{sec:2prod} that for any slack matrix $S$ with a 2-level row $r$, adding the opposite row (i.e. $\mathbf{1}-r$) does not change the fact that $S$ is a slack matrix (or a 1-\op{} or a 2-\op{}). Hence we can assume that our slack matrix $S$ contains all the non-negativity rows.

We now focus on the relationship between 1-sums and 1-\op{}s. As already remarked, if $S_1, S_2$ are the slack matrices of $B(M_1), B(M_2)$ respectively, then $S_1\otimes S_2$ is the slack matrix of $B(M_1)\times B(M_2)=B(M_1\oplus M_2)$. We now show that the converse holds, i.e. we need to make sure that, whenever we decompose the slack matrix of a matroid base polytope as a 1-\op, the factors are still matroid base polytopes.

\begin{lem} \label{lem:1-\op_matroid}
Let $M$ be a matroid and let $S$ be the slack matrix of $B(M)$. If $S=S_1\otimes S_2$ for some matrices $S_1, S_2$, then there are matroids $M_1, M_2$ such that $M=M_1\oplus M_2$ and $S_i$ is the slack matrix of $B(M_i)$ for $i=1,2$.
\end{lem}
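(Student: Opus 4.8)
The plan is to start from the partition of the rows of $S$ into $R_1, R_2$ witnessing $S = S_1 \otimes S_2$, and to translate it into a partition $E = E_1 \sqcup E_2$ of the ground set of $M$. By Assumption~\ref{assump:nonneg}, each element $e \in E$ corresponds to a non-negativity row $x_e \ge 0$ of $S$; I would put $e$ in $E_1$ or $E_2$ according to which of $R_1, R_2$ contains that row. (A priori $S$ could have other rows too; these are distributed among $R_1, R_2$ arbitrarily and play no role in defining the partition of $E$, but I will need them to be consistent with it — see below.) The claim to prove is then that $M = M|_{E_1} \oplus M|_{E_2}$, equivalently that every basis of $M$ splits as a basis of $M|_{E_1}$ together with a basis of $M|_{E_2}$, i.e. $\rk(E_1) + \rk(E_2) = \rk(E)$ and $\calB(M) = \calB(M|_{E_1}) \times \calB(M|_{E_2})$.

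The key step is the geometric translation via Theorem~\ref{thm:char_slack-matrices} and Lemma~\ref{lem:1-sum_slack}. Since $S$ is the slack matrix of $B(M)$ and $S = S_1 \otimes S_2$, Lemma~\ref{lem:1-sum_slack} gives polytopes $P_1, P_2$ with $S_i$ the slack matrix of $P_i$ and $B(M)$ affinely equivalent to $P_1 \times P_2$; moreover $\col(S) = \col(S_1) \times \col(S_2)$, so the columns of $S$ — which are the characteristic vectors of bases of $M$, extended by the extra rows — factor coordinatewise along $R_1, R_2$. Reading the non-negativity rows only, this says precisely that the set of characteristic vectors $\{\chi_B : B \in \calB(M)\} \subseteq \{0,1\}^E$ is a direct product $\{0,1\}^{E_1}\text{-part} \times \{0,1\}^{E_2}\text{-part}$: for any two bases $B, B'$, the "spliced" set $(B \cap E_1) \cup (B' \cap E_2)$ is again a basis. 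This is exactly the combinatorial characterization of a $1$-sum (equivalently, of $B(M) = B(M|_{E_1}) \times B(M|_{E_2})$), so setting $M_i := M|_{E_i}$ gives $M = M_1 \oplus M_2$. Finally, $B(M_i)$ has slack matrix $S_i$: the columns of $B(M_i)$ are the $E_i$-parts of bases of $M$, which are exactly $\col(S_i)$ restricted to non-negativity rows, and $S_i$ is a slack matrix of $P_i$ by Lemma~\ref{lem:1-sum_slack}, so $P_i$ is affinely equivalent to $B(M_i)$ by Theorem~\ref{thm:char_slack-matrices}; since affinely equivalent polytopes with the same (non-redundant) combinatorics share a slack matrix, $S_i$ is the slack matrix of $B(M_i)$.

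The main obstacle I anticipate is bookkeeping around the non-uniqueness of slack matrices and the "extra" rows of $S$ that are not non-negativity rows. I need to be careful that the affine equivalence $B(M) \cong P_1 \times P_2$ is compatible with the coordinate splitting I chose, i.e. that a row of $S$ lying in $R_1$ really does correspond to an inequality of $P_1$ and depends only on the $E_1$-coordinates — this is built into the definition of $1$-\op{} with respect to $R_1, R_2$, but the argument should state it cleanly. A second minor point is to handle degenerate cases excluded elsewhere (rank $< 2$, a factor being a single vertex): by Assumption~\ref{assump:loops} $M$ has no loops or coloops, which should force each $M_i$ to be nontrivial, but I would note this explicitly. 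Everything else — the equivalence between "characteristic vectors form a product set" and "$M$ is a $1$-sum", and the identification of the factor slack matrices — is routine once the partition $E = E_1 \sqcup E_2$ is in place.
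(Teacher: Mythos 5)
Your overall route is the same as the paper's: partition $E$ into $E_1,E_2$ according to which block of the row partition $R_1,R_2$ contains each non-negativity row, use the product structure of the columns to get the splicing property $\calB=\{B_1\cup B_2 : B_i\in\calB_i\}$, and then identify $S_i$ with a slack matrix of $B(M_i)$. However, there is a genuine gap in how you dispose of the degenerate case. What must be excluded is not ``a factor $M_i$ is trivial'' but that \emph{all} non-negativity rows land in the same block, say $R_1$, so that $E_2=\emptyset$, your partition of $E$ does not exist, and the conclusion $M=M_1\oplus M_2$ cannot even be formulated. Assumption \ref{assump:loops} is of no help here: it constrains $M$, not how the rows of $S$ are split between $R_1$ and $R_2$. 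The paper rules this case out using Assumption \ref{assump:const}: if every row $x(e)\ge 0$ lies in $R_1$, then the $R_1$-part of a column already determines the vertex $\chi_B$ and hence the entire column of $S$, so the 1-\op{} structure forces all columns of $S_2$ to coincide, making the $R_2$-rows of $S$ constant --- a contradiction. Your proposal never makes this argument, and your appeal to the no-loops/no-coloops assumption would not produce it (that assumption only helps once $E_1,E_2\neq\emptyset$ are established, to see that each factor has at least two bases).

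A second, softer spot is the final identification. Theorem \ref{thm:char_slack-matrices} only yields that $P_i$ is affinely equivalent to $\conv(\col(S_i))$, which is how $P_i$ arises in the first place; it does not give $P_i$ affinely equivalent to $B(M_i)$. Knowing that the non-negativity-row entries of the columns of $S_i$ are exactly the vertices of $B(M_i)$ is a bijection on vertex sets, which does not by itself yield an affine equivalence of polytopes: you still need that every row of $S_i$ is the slack of a valid inequality for $B(M_i)$, i.e.\ an affine function of the $E_i$-coordinates on the relevant affine hull. You gesture at this (``built into the definition of 1-\op{}''), but the definition only says that a row in $R_1$ depends on the $R_1$-part of a column; linking that to the $E_1$-coordinates of the vertex again requires the ``non-negativity rows determine the whole column'' observation above. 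The paper closes this step differently and more directly: after discarding redundant rows, every row of $S$ corresponds either to $x(U)\le\rk(U)$ with $U\subseteq E_1$ or $U\subseteq E_2$, or to a non-negativity inequality, and such rows must sit in the matching block, so $S_i$ is a slack matrix of $B(M_i)$ by inspection. With these two repairs your argument coincides with the paper's.
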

 \begin{proof} 
By Assumption \ref{assump:nonneg}, $S$ contains all the rows corresponding to inequalities $x(e)\geq 0$, for any $e$ element of $M$. Each such non-negativity inequality belongs either to $S_1$ or to $S_2$, hence we can partition $E$ into $E_1, E_2$ accordingly. Recall that the row set of $S$ can also be partitioned into sets $R_1, R_2$, as each row of $S$ corresponds to a row of $S_1$ or $S_2$. Notice that none of $E_1, E_2$ can be empty: if for instance $E_2$ is empty, then all the rows corresponding to $x(e)\geq 0$ belong to $R_1$. But then the slack of a vertex with respect to every other inequality (of form $x(U)\leq \rk(U)$ for some $U\subseteq E$) depends entirely on the slack with respect to the rows in $R_1$, implying that a column of $S\restr{R_1}$ can be completed to a column of $S$ in a unique way. Hence, since $S$ is a 1-\op, we must conclude that $S_2$ is made of a single column, contradicting the fact that $S$ does not have constant rows (Assumption \ref{assump:const}). 

Now, let $\calB_i=\{B\cap E_i: B\in \calB\}$ for $i=1,2$. By definition of 1-\op{} of matrices, $B(M)=\{B_1\cup B_2: B_i\in \calB_i $ for $i=1,2\}$. 
This implies that $M=M_1\oplus M_2$ where $M_i=M\restr{E_i}$  for $i=1,2$, thus $B(M)=B(M_1)\times B(M_2)$. Hence, for every row of $S$ corresponding to an inequality $x(U)\leq \rk(U)$, we have either $U\subseteq E_1$, $U\subseteq E_2$, or the inequality is redundant and can be removed. In the first case, clearly the row is in $R_1$ as its entries depend only on the rows $x(e)\geq 0$ for $e\in E_1$, and similarly in the second case the row is in $R_2$. As by removing redundant rows we do not change the polytopes of which $S,S_1,S_2$ are slack matrices, we then conclude that $S_i$ is a slack matrix of $B(M_i)$ for $i=1,2$.
\end{proof}

\begin{cor}\label{cor:matroid_conn}
Let $M$ be a matroid and let $S$ be the slack matrix of $B(M)$. Then $M$ is connected if and only if $S$ is irreducible. 
\end{cor}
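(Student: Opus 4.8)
The plan is to deduce the statement from Lemma~\ref{lem:1-\op_matroid} together with the elementary fact that a matroid is connected exactly when it is not a nontrivial $1$-sum. Recall from Section~\ref{sec:slackmatroids} that $M$ fails to be connected precisely when $M=M_1\oplus M_2$ for matroids $M_1,M_2$ with nonempty ground sets $E_1,E_2$; since $|E_1|+|E_2|=|E|$, each $E_i$ is then a proper nonempty subset of $E$, which is the content of ``fewer elements'' in that definition. I will treat the two implications separately.

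For ``$M$ connected $\Rightarrow$ $S$ is not a $1$-\op{}'' I would argue by contraposition. Suppose $S=S_1\otimes S_2$ with $S_1,S_2$ nonempty. By Lemma~\ref{lem:1-\op_matroid} there are matroids $M_1,M_2$ with $M=M_1\oplus M_2$ and $S_i$ the slack matrix of $B(M_i)$; in particular the ground sets of $M_1$ and $M_2$ are nonempty (this is built into the definition of $\oplus$, and is checked explicitly in the proof of Lemma~\ref{lem:1-\op_matroid}). Hence $M$ is a nontrivial $1$-sum and is not connected.

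For the converse I would start from a nontrivial decomposition $M=M_1\oplus M_2$, which gives $B(M)=B(M_1)\times B(M_2)$. By Assumption~\ref{assump:loops} neither $M_i$ has a loop or a coloop (a loop or coloop of a summand is one of $M$), so neither $M_i$ has a unique basis; hence each $B(M_i)$ has at least two vertices and thus a slack matrix $S_i$ of rank at least $2$. By Lemma~\ref{lem:1-sum_slack}, $S_1\otimes S_2$ is then a slack matrix of $B(M_1)\times B(M_2)=B(M)$, and it is non-redundant since the facets of a Cartesian product are exactly the products of a facet of one factor with the whole other factor, while its vertices are exactly the pairs of vertices. As $S$ is (up to the harmless complementary non-negativity rows added in Assumption~\ref{assump:nonneg}, which respect the factorization) a non-redundant slack matrix of $B(M)$, and non-redundant slack matrices coincide up to permuting rows and columns and rescaling rows by positive reals — the rescaling being absorbed into the choice of $S_1$ and $S_2$ — we conclude that $S$ is isomorphic to $S_1\otimes S_2$, i.e.\ $S$ is a $1$-\op{}.

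The one point requiring care is exactly this matching step in the converse: a polytope does not have a unique slack matrix, so one has to make sure that the $1$-\op{} structure produced from the factor polytopes really reproduces the given $S$ and not some other slack matrix of $B(M)$. Working with non-redundant slack matrices (as in the rest of this section) together with the explicit description of the faces of a Cartesian product settles it; the remainder is a direct invocation of Lemma~\ref{lem:1-\op_matroid} and of the correspondence between $1$-sums of matroids and Cartesian products of their base polytopes.
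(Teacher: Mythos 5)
Your proposal is correct and follows the same route the paper implicitly intends: the ``$S$ a $1$-\op{} $\Rightarrow M$ disconnected'' direction is exactly Lemma~\ref{lem:1-\op_matroid}, and the converse is the paper's earlier remark that $M=M_1\oplus M_2$ gives $B(M)=B(M_1)\times B(M_2)$, whence a $1$-\op{} slack matrix via Lemma~\ref{lem:1-sum_slack}. Your extra care in matching the given $S$ with $S_1\otimes S_2$ (non-redundancy, absorbing row scalings, and the non-negativity rows of Assumption~\ref{assump:nonneg} respecting the factorization) only makes explicit what the paper leaves unstated.
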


Now, we deal with slack matrices of connected matroids and with the operation of 2-\op. 
We will need the following result, which provides a description of the base polytope of a 2-\op{} $M_1\oplus_2 M_2$ in terms of the base polytopes of $M_1, M_2$. Its proof can be derived from \cite{Grande16}, or found in \cite{aprile20182}.

\begin{lem}\label{obs:2sumpolydescription}
	Let $M_1, M_2$ be matroids on ground sets $E_1, E_2$ respectively, with $E_1\cap E_2=\{p\}$, and let $M=M_1\oplus_2 M_2$. Then $B(M)$ is affinely equivalent to $$(B(M_1)\times B(M_2))\cap\{(x,y) \in\R^{E_1} \times \R^{E_2} \mid x_{p}+y_{p}=1\}.$$
\end{lem}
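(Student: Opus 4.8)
The plan is to realize the stated affine equivalence by the coordinate projection that forgets the two ``glued'' coordinates. Write $E:=(E_1\cup E_2)\setminus\{p\}$ for the ground set of $M$, and let $\pi\colon\R^{E_1}\times\R^{E_2}\to\R^{E}$ be the linear map that deletes the coordinates $x_p$ and $y_p$. On $B(M_1)\times B(M_2)$ the equations $x(E_1)=\rk(M_1)$ and $y(E_2)=\rk(M_2)$ hold, so $x_p=\rk(M_1)-\sum_{e\in E_1\setminus\{p\}}x_e$ and $y_p=\rk(M_2)-\sum_{f\in E_2\setminus\{p\}}y_f$ are determined by $\pi(x,y)$; hence $\pi$ restricted to $B(M_1)\times B(M_2)$, and in particular to the subpolytope $Q:=(B(M_1)\times B(M_2))\cap\{x_p+y_p=1\}$, is an injective affine map. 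It therefore suffices to prove $\pi(Q)=B(M)$, for then $\pi|_Q$ is an affine isomorphism onto $B(M)$.

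For the inclusion $B(M)\subseteq\pi(Q)$, I would argue on vertices. A basis $B$ of $M$ has, by definition of the $2$-sum, the form $B=(B_1\cup B_2)\setminus\{p\}$ with $B_i\in\calB_i$ and $p\in B_1\triangle B_2$. A direct check (using $E_1\cap E_2=\{p\}$) shows $\chi^{B}=\pi(\chi^{B_1},\chi^{B_2})$, while $(\chi^{B_1},\chi^{B_2})$ is a vertex of $B(M_1)\times B(M_2)$ with $(\chi^{B_1})_p+(\chi^{B_2})_p=1$, i.e. a point of $Q$. Taking convex hulls gives $B(M)=\conv\{\chi^{B}\mid B\in\calB\}\subseteq\pi(Q)$.

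For the reverse inclusion $\pi(Q)\subseteq B(M)$, since $\pi$ is affine it is enough to show $\pi(v)\in B(M)$ for every vertex $v$ of $Q$, and for this it suffices to show that $Q$ has no vertices other than the points $(\chi^{B_1},\chi^{B_2})$ with $B_i\in\calB_i$, $p\in B_1\triangle B_2$ (which we just saw are mapped by $\pi$ to vertices of $B(M)$). In other words, intersecting $B(M_1)\times B(M_2)$ with the hyperplane $H:=\{x_p+y_p=1\}$ must not create a new vertex; equivalently, $H$ must not meet any edge of $B(M_1)\times B(M_2)$ in its relative interior. This is exactly the Cartesian-product edge argument used in the proof of Lemma~\ref{lem:2-sum_slack}(i): if an edge with endpoints $(v_1,v_2),(w_1,w_2)$ were cut by $H$ in its interior, then up to symmetry $v_{1,p}+v_{2,p}<1<w_{1,p}+w_{2,p}$; since all coordinates of vertices of matroid base polytopes lie in $\{0,1\}$, this forces $v_{1,p}=v_{2,p}=0$ and $w_{1,p}=w_{2,p}=1$, contradicting the fact that $v_1=w_1$ or $v_2=w_2$. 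Hence $\vertex(Q)=\{(\chi^{B_1},\chi^{B_2})\mid p\in B_1\triangle B_2\}$, so $\pi(\vertex(Q))\subseteq\vertex(B(M))$ and $\pi(Q)=\conv(\pi(\vertex(Q)))\subseteq B(M)$, which completes the argument.

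I expect the only non-routine step to be the no-new-vertex claim in the third paragraph; everything else is bookkeeping about the combinatorics of the $2$-sum and about which defining equation recovers a deleted coordinate. Fortunately that step is literally the geometric content of Lemma~\ref{lem:2-sum_slack}(i), so it can be invoked rather than redone, and the hypothesis that $p$ is neither a loop nor a coloop of $M_1$ or $M_2$ is needed only to make the $2$-sum, hence the statement, well defined.
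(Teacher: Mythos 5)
Your proposal is correct, and it is worth noting that the paper itself does not prove Lemma~\ref{obs:2sumpolydescription}: it only points to \cite{Grande16} and \cite{aprile20182}. Your argument is thus a self-contained alternative, and it is sound: the projection $\pi$ deleting the coordinates $x_p,y_p$ is indeed injective and affine on $B(M_1)\times B(M_2)$ because of the equations $x(E_1)=\rk(M_1)$, $y(E_2)=\rk(M_2)$; the vertices of $P:=B(M_1)\times B(M_2)$ lying on $H=\{x_p+y_p=1\}$ are exactly the pairs $(\chi^{B_1},\chi^{B_2})$ with $p\in B_1\triangle B_2$, and these are mapped by $\pi$ precisely onto the incidence vectors of the bases of $M_1\oplus_2 M_2$; and your no-new-vertex step is the edge argument from the proof of Lemma~\ref{lem:2-sum_slack}(i), transported from the hyperplane $x_1=y_1$ to $x_p+y_p=1$ (the two are related by the affine substitution $y_p\mapsto 1-y_p$, consistent with the special row of $S_2$ being the slack of $y_p\le 1$), and your 0/1-coordinate case analysis combined with the Cartesian-product edge property ($v_1=w_1$ or $v_2=w_2$) is exactly right. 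You also implicitly use, as the paper does in Lemma~\ref{lem:2-sum_slack}(i), the standard fact that a new vertex of $P\cap H$ can only arise in the relative interior of an edge of $P$ crossed by $H$; invoking it without proof matches the paper's own level of rigor. The trade-off between the two routes is simply brevity versus self-containment: the citation is shorter, while your proof keeps the matroid section independent of external references by reusing machinery already established in Section~\ref{sec:def}, at the modest cost of repeating the bookkeeping about which basis pairs of $M_1,M_2$ yield bases of the $2$-sum.
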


Lemma \ref{obs:2sumpolydescription} implies that if $M=M_1\oplus_2 M_2$ and $S_i$ is a slack matrix of $B(M_i)$ for $i=1,2$, then the slack matrix of $B(M)$ is actually $(S_1,x_p) \otimes_2 (S_2,\overline{y}_p)$, where $x_p$ is the row corresponding to $x_p\geq 0$, and $\overline{y}_p$ the row corresponding to $y_p\leq 1$. If the special rows $x_p, \overline{y}_p$ have this form, we say that they are \emph{coherent}.

The only missing ingredient is now a converse to the above statement. In particular, we would need that if the slack matrix of a base polytope is a 2-\op, then the corresponding matroid is a 2-sum. We prove this under the additional assumption that one of the factor of the 2-\op{} corresponds to a uniform matroid: this is an assumption we can always make, thanks to Theorem \ref{thm:matroid-2-level}.

\begin{lem} \label{lem:2-\op_matroid}
Let $M = (E,\calB)$ be a connected matroid and let $S$ be the slack matrix of $B(M)$. Assume there are $S_1, S_2$ such that $S=(S_1,x_1)\otimes_2 (S_2,\overline{y}_1)$, for some 2-level rows $x_1, \overline{y}_1$, and let $S_1'=S_1+(\mathbf{1}-x_1)$ and similarly for $S_2'$. Assume that $S_1$ or $S_1'$ is equal to $S_{d,k}$ for some $d>k\geq 1$. Then there is a matroid $M_2$ such that $M=U_{d,k}\oplus_2 M_2$ and $S_2'$ is a slack matrix of $B(M_2)$.
\end{lem}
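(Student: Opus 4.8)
The plan is to exploit Lemma~\ref{lem:2-sum_slack}(ii) together with the combinatorial characterization of matroid base polytopes in order to reconstruct the matroid $M_2$. First I would set up the geometry: by Lemma~\ref{lem:2-sum_slack}(ii), since $S$ is a slack matrix, both $S_1'$ and $S_2'$ are slack matrices, say of polytopes $P_1$ and $P_2$; moreover, chasing the proof of Lemma~\ref{lem:2-sum_slack}(i) (applied to $S_1'$ and $S_2'$ whose special rows are now complementary, i.e.\ $S = (S_1,x_1)\otimes_2(S_2,\overline{y}_1)$ together with the extra rows makes $S$ a slack matrix of $(P_1\times P_2)\cap H$ where $H = \{(x,y)\mid x_1 = 1-y_1\}$, i.e.\ $x_1 + y_1^{\mathrm{new}} = 1$ in the coordinates where $y_1^{\mathrm{new}}$ is the coordinate added via $\mathbf{1}-\overline{y}_1$). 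Since $S$ is the slack matrix of $B(M)$, Theorem~\ref{thm:char_slack-matrices} gives that $B(M)$ is affinely equivalent to $(P_1\times P_2)\cap H$. Now use the hypothesis that $S_1' = S_{d,k}$ (or $S_1 = S_{d,k}$, treated symmetrically): then $P_1$ is affinely equivalent to the hypersimplex $B(U_{d,k})$, and the special coordinate $x_1$ is exactly a non-negativity coordinate $x_p \geq 0$ of $U_{d,k}$ (this is where the ``coherent'' setup and the structure of $S_{d,k}$ are used — in $S_{d,k}$ each row is either a non-negativity row or its complement, so after scaling and possibly swapping with its complement, $x_1$ corresponds to $x_p\ge 0$ for some element $p$ of $U_{d,k}$).

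The second step is to identify $P_2$ as a matroid base polytope. Here I would argue that $P_2$ has $0/1$ vertices lying on a hyperplane of the form $\sum_e y_e = \text{const}$ and satisfies the ``exchange'' structure forcing it to be a base polytope — but a cleaner route is available: since $B(M)$ is $2$-level and $M$ is connected, Theorem~\ref{thm:matroid-2-level} tells us $M$ is built from uniform matroids by $1$- and $2$-sums, and connectivity rules out the $1$-sum at the top level, so $M = N_1 \oplus_2 N_2$ for some matroids $N_1, N_2$ with $N_1$ uniform (we may choose the decomposition so the uniform piece is split off). By Lemma~\ref{obs:2sumpolydescription} and the paragraph following it, the slack matrix of $B(M)$ equals $(T_1, x_p)\otimes_2 (T_2,\overline{z}_p)$ with $T_1 = S_{d',k'}$ for the appropriate uniform matroid $N_1 = U_{d',k'}$ and $T_2$ a slack matrix of $B(N_2')$. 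The crux is then a \emph{uniqueness} statement: the decomposition of the slack matrix $S$ of $B(M)$ as a $2$-\op{} with one factor equal to some $S_{d,k}$ is essentially forced, so $(S_1',S_{d,k})$-data must coincide (up to isomorphism and the $S_{d,k}=S_{d,d-k}$ ambiguity) with $(T_1, S_{d',k'})$. To get this I would use Lemma~\ref{lem:unique_decomposition} applied to the submatrices $S^0, S^1$: recall that a $2$-\op{} decomposition is governed by a common $1$-\op{} partition of the non-special rows, and $S_{d,k}$ has a very rigid irreducible-factor structure (for $2\le k\le d-2$ it is irreducible, and for $k=1$ it is the identity $I_d$ which decomposes completely), pinning down which rows of $S$ belong to the $U_{d,k}$-part.

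Once that identification is made, we have $d = d'$, $k\in\{k',d'-k'\}$, hence $U_{d,k} = N_1$ or its dual (and $B(U_{d,k}) = B(U_{d,k}^*)$, so they have the same slack matrix, which is all we need), and $S_2'$ is isomorphic to $T_2$, a slack matrix of $B(N_2')$ where $N_2' = N_2$ with the complement-of-$p$ bookkeeping; set $M_2 := N_2'$. Then $M = U_{d,k}\oplus_2 M_2$ by construction and $S_2'$ is a slack matrix of $B(M_2)$, as required.

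The main obstacle I expect is the uniqueness/identification step: showing that the combinatorial data of the $2$-\op{} decomposition of $S$ is rigid enough that an \emph{arbitrary} decomposition $S=(S_1,x_1)\otimes_2(S_2,\overline{y}_1)$ with $S_1'$ (or $S_1$) equal to a hypersimplex slack matrix must come from an actual $2$-sum decomposition of $M$. The technical heart is matching the row partition $R^0\cup R^1$ (from the special row of $S$) and the $1$-\op{} partition of the non-special rows against the non-negativity rows guaranteed by Assumption~\ref{assump:nonneg}, so that the ground set $E$ splits as $E_1\cup E_2$ with $|E_1\cap E_2|=1$ and the single shared element is exactly $p$; the degenerate cases $k=1$ (where $S_{d,1}=I_d$ is highly reducible) and the $S_{d,k}$ vs.\ $S_{d,d-k}$ symmetry will need to be handled separately but are routine once the framework is in place.
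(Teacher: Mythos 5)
Your proposal has a genuine gap at precisely the step you yourself flag as the ``main obstacle'', and that step is not routine. You derive from Theorem~\ref{thm:matroid-2-level} (which, note, requires $B(M)$ to be $2$-level --- a hypothesis the lemma does not state and which the paper's proof never uses) that $M=N_1\oplus_2 N_2$ with $N_1$ uniform, and you then assert that the \emph{given} decomposition $S=(S_1,x_1)\otimes_2(S_2,\overline{y}_1)$ must coincide, up to the $S_{d,k}=S_{d,d-k}$ ambiguity, with the decomposition induced by this matroid $2$-sum. Nothing in your sketch proves this identification. A connected matroid may admit many inequivalent $2$-sum decompositions (different $2$-separations, e.g.\ a circuit $U_{n,n-1}$ splits as a $2$-sum of smaller circuits in many ways), and an arbitrary $2$-\op{} decomposition of $S$ comes with its own special row of $S$ and its own partition of the non-special rows; tying these to an actual $2$-separation of $M$ is the whole content of the lemma, not bookkeeping. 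Your appeal to Lemma~\ref{lem:unique_decomposition} only controls the $1$-\op{} structure of $S^0,S^1$ and says nothing about which row of $S$ can serve as a special row, nor which matroid $2$-separation (if any) it corresponds to. In particular you never show that the special row of $S$ is not itself a non-negativity row $x(e)\ge 0$ (so that the shared element $p$ is genuinely new); the paper needs a separate argument for this, using Corollary~\ref{cor:matroid_conn} together with the fact that for a connected matroid at least one of $M-e$, $M/e$ is connected.

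For contrast, the paper does not use Theorem~\ref{thm:matroid-2-level} or any uniqueness-of-decomposition statement at all. After the connectivity argument just mentioned, it partitions $E$ into $E_1,E_2$ via the non-negativity rows (Assumption~\ref{assump:nonneg}), reads off from $S_1=S_{d,k}$ that the special row is $x(p)\ge 0$ or $x(p)\le 1$ for a new element $p$, and then \emph{explicitly constructs} the candidate base family
$\calB_2=\{B_2+p:\ B_1\cup B_2\in\calB,\ B_1\subset E_1,\ |B_1|=k\}\cup\{B_2:\ B_1\cup B_2\in\calB,\ B_1\subset E_1,\ |B_1|=k-1\}$,
verifying the basis-exchange axiom directly by a five-case analysis; only then does it invoke Lemma~\ref{obs:2sumpolydescription} to conclude $M=U_{d,k}\oplus_2 M_2$ and that $S_2$ is a slack matrix of $B(M_2)$. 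This exchange-axiom verification is the technical heart that your proposal replaces with an unproved rigidity claim, so as written the proposal does not constitute a proof. If you want to salvage your route, you would have to prove a genuine uniqueness statement matching row partitions of $S$ with $2$-separations of $M$, which is likely at least as hard as the paper's direct construction.
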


\begin{proof} 
We first claim that the special row $r$ of $S$ does not correspond to any non-negativity inequality (which are all present in $S$ thanks to Assumption \ref{assump:nonneg}): indeed, if it corresponds to $x(e)\geq 0$ for some $e\in E$, then it is not hard to see that $S^{00}$ is the slack matrix of $M-e$, and similarly $S^{11}$ is the slack matrix of $M/e$. But both matrices are 1-\op{}s, hence by Corollary \ref{cor:matroid_conn}, none of $M-e, M/e$ is connected. But this is in contradiction with the well known fact (see~\cite{tutte1965lectures}) that, if $M$ is connected, then at least one of $M-e, M/e$ is.

Hence, each inequality $x(e)\geq 0$ corresponds to a row of either $S_1$ or $S_2$, giving a partition of $E$ in $E_1, E_2$. We will now proceed similarly as in the proof of Lemma \ref{lem:1-\op_matroid}: first, by noticing that the slack of any vertex with respect to $x(U)\leq \rk(U)$ depends exclusively on the slack with respect to the non-negativity inequalities, we can again conclude that $E_1, E_2$ are not empty.
Since $S_1=S_{n,k}$ is the slack matrix of $U_{n,k}$, the special row $x_1$ of $S_1$ corresponds to the inequality $x(p)\geq 0$, or $x(p)\leq 1$ for some element $p$: we can assume that $S_1$ contains both rows (which are opposite), so that we do not need to mention $S_1'$, and similarly for $S_2$, and we consider the case in which $x_1$ corresponds to $x(p)\geq 0$, the other being analogous. Notice that $p$ is not in $E$, as the special row of $S$ does not correspond to a non-negativity inequality. Let us define $M_1=U_{n,k}$ on ground set $E_1'=E_1+p$, with base set $\calB_1=\binom{E_1'}{n}$ and let:
$$\calB_2=\{B_2+p: B_1\cup B_2\in \calB, B_1\subset E_1, |B_1|=k\}\cup  \{B_2: B_1\cup B_2\in \calB, B_1\subset E_1, |B_1|=k-1\}.
$$
We now claim that $M_2$ with ground set $E_2'=E_2+p$ and base set $\calB_2$ is a matroid. Proving this claim will conclude the proof: notice that due to the 2-\op{} structure of $S$, a basis $B_2 \in \calB_2$ can be completed to a basis of $M$ by adding any $B_1\in \calB_1$ that satisfies $p\in B_1\Delta B_2$, and removing $p$. This implies that $M=M_1\oplus_2 M_2$.
Hence $B(M)$ is isomorphic to $B(M_1)\times B(M_2)\cap \{x_p+y_p=1\}$, thanks to Lemma \ref{obs:2sumpolydescription}, and can be described by: a description of $B(M_1)$, a description of $B(M_2)$, and the equations $x(E_1)+y(E_2)=\rk(M), x_p+y_p=1$, which do not appear in the slack matrix. Now, as $S_1$ is the slack matrix of $B(M_1)$, the rows of $R_2$ must correspond to a description of $B(M_2)$: from this it follows that $S_2$ is a slack matrix of $B(M_2)$, concluding the proof.

To prove the claim, we now show that $\calB_2$ satisfies the axioms for the base set of a matroid: it is non-empty (which is clear) and for any $B_2, B_2'\in \calB_2$ and $e\in B_2\setminus B_2'$, there exists $f\in B_2'\setminus B_2$ such that  $B_2-e+f\in \calB_2$. We fix such $B_2, B_2', e$ and distinguish a number of cases.
\begin{enumerate}
\item $p\in B_2\cap B_2'$. Then for any $B_1\in\calB_1$ with $p\not\in B_1$ we have that $B_1\cup B_2-p, B_1\cup B_2' -p$ are bases of $M$, hence by applying the base axiom to them we obtain that there exists $f\in (B_1\cup B_2'-p)\setminus (B_1\cup B_2 -p)=B_2'\setminus B_2$ such that $B_1\cup B_2-p-e+f \in \calB$, but then $B_2-e+f\in \calB_2$ by definition.
\item $p\not\in B_2\cup B_2'$. This case is analogous to the previous one.
\item $p \in B_2\setminus B_2'$, and $e\neq p$. Let $B_1, B_1'\in \calB$ with $B_1\Delta B_1'=\{p,g\}$ and in particular $p\in B_1'\setminus B_1$. Then we have $B=B_1\cup B_2-p, B'=B_1'\cup B_2'-p \in \calB$.  Then by the base axiom there exists $f \in B'\setminus B$ with $B-e+f\in \calB$. Since $g\in B_1\setminus B_1'\subset B\setminus B'$, we have $g\neq f$ and we conclude that $f\in B_2'\setminus B_2$, hence $B_2-e+f \in \calB_2$.
\item $p \in B_2\setminus B_2'$, and $e=p$. This case is analogous to the previous one, but we apply the axiom to $g\in B\setminus B'$ instead of $e$.
\item $p \in B_2'\setminus B_2$. Let $B_1, B_1'\in \calB$ with $B_1\triangle B_1'=\{p,g\}$ and in particular $p\in B_1\setminus B_1'$, then again  $B=B_1\cup B_2-p, B'=B_1'\cup B_2'-p \in \calB$ and there is $f \in B'\setminus B$ with $B-e+f\in \calB$. If $f \in B_2'$, then $f\not\in B_2$ an we are done as before. Otherwise $f=g$, then $B-e+g=B_1'\cup B_2-e\in \calB$, but then by definition $B_2-e+p \in \calB_2$.  
\end{enumerate}
\end{proof}

We are now ready to prove the main result of this section, namely, Theorem \ref{thm:main3}.


\begin{proof}[Proof of Theorem~\ref{thm:main3}]
We first check whether $S=S_{d,k}$ for some $d$ and $k$, in which case we are done.

Then, we run the algorithm to recognize 1-\op{}s, and if $S$ is a 1-\op, we decompose it in irreducible factors $S_1,\dots, S_t$ and test each $S_i$ separately. This can be done efficiently thanks to Theorem \ref{thm:main1}, and using Lemma \ref{lem:1-\op_matroid} we have that $S$ is the slack matrix of $B(M)$ if and only if $S_i$ is the slack matrix of $B(M_i)$ for each $i$, and $M=M_1\oplus \dots \oplus M_t$.

We can now assume that $S$ is irreducible, and apply the algorithm from Theorem \ref{thm:2-sum_recog} until we decompose $S$ as a repeated 2-\op{} of matrices $S_1,\dots,S_t$ where $S_i=S_{d_i,k_i}$ for $i=1,\dots,t$ (of course, if this is not possible, we conclude that $S$ is not a slack matrix of a base polytope). There is one last technicality we have to deal with, before we can conclude that $S$ is the slack matrix of a matroid polytope. Indeed, as noticed above, we need to ensure that each pair of special rows involved in a 2-\op{} is coherent. 
Note that, unless $S_i$ is the identity matrix (in which case all its rows are non-negativity rows), we can choose whether $S_i$ is the slack matrix of $U_{d_i,k_i}$ or of its dual $U_{d_i,d_i-k_i}$, hence we can choose the form of the special row. Hence $S$ is the slack matrix of a matroid polytope if and only if there is a choice that makes all the pairs of special rows coherent. This problem can be easily solved as follows: define a tree with nodes $S_1,\dots,S_t$, where two nodes $S_i, S_j$ are joint if the 2-\op{} $S_i\otimes_2 S_j$ occurs during the decomposition of $S$. Now, by coloring the nodes of the tree by two colors, according to the form of the special row, one can efficiently determine whether there exists a proper coloring satisfying the ``fixed" colors (given by the $S_i$'s that are identity matrices). Notice that, if there exists a feasible coloring, then this determines a matroid $M$, and it is essentially unique: it is easy to see that the only other possible coloring gives rise to the dual matroid $M^*$, corresponding to the same slack matrix. This concludes the algorithm. Notice that, in case $S$ is the slack matrix of $B(M)$, $M$ (or its dual) can be reconstructed by successively taking the 2-\op{} of the $U_{d_i,k_i}$'s (or of their duals, depending on the coloring found).
\end{proof}



\section*{Acknowledgements}

This project was supported by ERC Consolidator Grant 615640-ForEFront and by a gift by the SNSF.

\printbibliography

\end{document}